\documentclass[reqno]{amsart}

\usepackage[utf8]{inputenc}
\usepackage[T1]{fontenc}
\usepackage{amsmath,amsfonts,amssymb,amsthm}
\usepackage{thmtools} 
\usepackage{color}
\usepackage{textcmds}
\usepackage{subcaption}
\usepackage[european]{circuitikz}
\usepackage{pdflscape}
\usepackage{multirow}
\usepackage{hyperref}

\usepackage{multirow}
\usepackage{multicol}
\usepackage{bm}
\usepackage{enumitem}
\usepackage{graphicx} 
\usepackage{booktabs} 
\usepackage{multirow} 
\usepackage{makecell} 
\usepackage{mathtools}
\usepackage{csquotes}
\usepackage{tabularx}
\usepackage{nicefrac}
\newcolumntype{L}{>{\raggedright\arraybackslash}X}   
\newcolumntype{C}{>{\centering\arraybackslash}X}     
\newcolumntype{R}{>{\raggedleft\arraybackslash}X}    

\usepackage{graphicx}

\def\ex{\mbox{\scriptsize\text{ex}}}

\def\env{\mbox{\scriptsize\text{env}}}
\def\diff{\ensuremath \mathrm{d}}

\def\myref{\mbox{\scriptsize\text{ref}}}

\def\nnodes{n_e}
\def\resist{G}
\def\resistM{\bm{G}}
\def\resistT{G_T}
\def\resistN{G_N}
\def\resistTM{\bm{G}_T}
\def\resistNM{\bm{G}_N}
\def\capM{{\bm{C}}_C}
\def\nT{n_T}
\newcommand{\resistTi}[1]{G_{T#1}}
\def\canonic{\hat{\bm{e}}}
\def\nbuild{N}

\newcommand{\subsystem}[1]{\ensuremath\mathfrak{S}_{#1}} 
\def\system{\mathfrak{S}} 
\newcommand{\splitpart}[1]{\ensuremath\mathfrak{P}_{#1}} 

\usepackage{bbm}
\def\one{\mathbbm{1}}
\def\R{\mathbbm{R}}
\usepackage{booktabs}
\theoremstyle{plain}
\newtheorem{theorem}{Theorem}
\newtheorem{proposition}[theorem]{Proposition}
\newtheorem{lemma}[theorem]{Lemma}

\theoremstyle{definition}
\newtheorem{definition}[theorem]{Definition}
\newtheorem{remark}[theorem]{Remark}
\newtheorem{example}[theorem]{Example}

\usepackage{amsmath}
\usepackage{amssymb}
\usepackage{mathrsfs}
\usepackage{pgfplots}
\pgfplotsset{compat=1.18} 
\usepgfplotslibrary{groupplots} 
\begin{document}

\title[Energy-Consistent Splitting and Decomposition Approaches for pH-ODEs]{Energy-Consistent Splitting and Decomposition Approaches for Coupled port-Hamiltonian ODEs}

\author[M. Mönch et al.]{Marius Mönch$^{1,\star}$ \and Nicole Marheineke$^1$ \and  Andreas Bartel$^2$ \and Kevin Schäfers$^2$ \and Michael Günther$^2$}

\date{\today\\
$^1$ Trier University, Arbeitsgruppe Modellierung und Numerik, Universit\"atsring 15, D-54296 Trier, Germany\\
$^2$ University of Wuppertal, School of Mathematics and Natural Sciences, Gaußstr.~20, D-42119 Wuppertal, Germany \\
$^\star$ corresponding author, moench@uni-trier.de, orcid 0009-0000-2582-2199}

\begin{abstract}
Operator splitting provides an attractive approach for the numerical integration of (coupled) port-Hamiltonian systems, as it allows the underlying system structure to be exploited at the level of the individual subproblems. However, the choice of the decomposition is not unique and may strongly affect both the computational efficiency and the preservation of the energy behavior of the original system. In this work, we investigate this interplay systematically and introduce energy consistency as a criterion for assessing splitting methods for port-Hamiltonian ordinary differential equations. We derive sufficient conditions under which a splitting based on a given decomposition inherits the energy behavior of the continuous system and use these conditions to analyze several decomposition strategies for coupled port-Hamiltonian systems. In particular, we compare decompositions that preserve the structure with approaches that exploit lower-dimensional subsystem dynamics or separated time scales. The analysis is complemented by numerical experiments using Strang splitting and its multiple-time-stepping extension. A scalable electro-thermal benchmark with fast electrical and slow thermal dynamics is employed to assess accuracy, energy behavior, and computational efficiency. The results demonstrate that preserving the port-Hamiltonian structure of the subflows is essential for energy-consistent splitting, whereas decompositions that exploit subsystem structure or time-scale separation can provide substantial computational advantages. In particular, the time-scale decomposition yields significant efficiency gains for systems with pronounced multirate characteristics, while structure-destroying decompositions may lead to undesirable energy behavior.
\end{abstract}

\keywords{Port-Hamiltonian systems, Splitting methods, Multiple time stepping, Energy consistency, Structure-preserving integration, Electro-thermal coupling\\
\textit{MSC.} 37J06, 65P10, 37M15}

\maketitle

\section{Introduction}

Port-Hamiltonian systems (PHS) provide a systematic framework for modeling interconnected physical systems in terms of energy storage, energy exchange, dissipation, and external interaction. Their inherent structure makes the energy balance explicit and provides a natural basis for describing qualitative properties such as passivity, dissipation, and energy conservation. This is particularly attractive for multiphysical and networked applications, where systems from different physical domains are coupled through energy exchange; see, e.g., \cite{bartel2024,hauschild2020,ponce2024,vanderschaft2014}.

In this work, we consider port-Hamiltonian systems of implicit ordinary differential equations (pH-ODEs) of the form
\begin{equation}\label{eq:pH-ODE}
    \begin{aligned}
       \system\colon \quad \bm{E}(\bm x)\dot{\bm{x}} &= \bigl(\bm{J}(\bm{x})-\bm{R}(\bm{x})\bigr)\bm{z}(\bm{x})+\bm{B}(\bm{x})\bm{u}(t) = \bm{f}(t,\bm{x}),
       \qquad \bm{x}(t_0)=\bm{x}_0,\\
       \bm{y} &= \bm{B}(\bm{x})^{\top}\bm{z}(\bm{x}),
    \end{aligned}
\end{equation}
where $\bm{J}(\bm{x}) = - \bm{J}(\bm{x})^\top$ skew-symmetric, $\bm{R}(\bm{x}) = \bm{R}(\bm{x})^\top \succeq \bm{0}$ symmetric positive semi-definite, $\bm{E}(\bm{x})$ regular and $\bm{E}(\bm{x})^\top \bm{z}(\bm{x}) = \nabla \mathcal{H}(\bm{x})$.  The Hamiltonian $\mathcal{H}$ represents the stored energy. Consequently, assuming sufficient regularity, the continuous dynamics satisfy the power balance
\begin{equation}\label{eq:power_balance}
\tfrac{\mathrm{d}}{\mathrm{d}t} \mathcal{H}(\bm{x}(t))
= - \bm{z}(\bm{x}(t))^{\top} \bm{R}(\bm{x}(t)) \bm{z}(\bm{x}(t))
    + \bm{z}(\bm{x}(t))^{\top} \bm{B}(\bm{x}(t))\bm{u}(t)
\le \bm{y}(t)^{\top} \bm{u}(t),
\end{equation}
which expresses the passivity of the system. In particular, in the absence of external input, i.e., $\bm{u}\equiv \bm{0}$, the stored energy is non-increasing.
In many applications, large port-Hamiltonian systems are assembled from several interacting subsystems. The resulting coupled systems inherit a block structure from the underlying physical interconnection and may exhibit substantially different dynamic time scales. This structure is beneficial for numerical simulation, since the dynamics may be decomposed into smaller or otherwise simpler subproblems.

Operator splitting is a natural approach for exploiting such structure. The basic idea is to decompose the vector field $ \bm{f}(t,\bm{x})=\sum_{i=1}^N\bm{f}^{[i]}(t,\bm{x})$ and to approximate the flow of the full system by compositions of the flows associated with the subproblems. The subflows can then be approximated by suitable numerical integration methods.  Splitting schemes are well established in numerical analysis; see, e.g., \cite{blanes2024, mclachlan2002}. 
For port-Hamiltonian systems, however, the decomposition is not merely a computational choice. It may determine whether the individual subproblems retain the structural properties responsible for the energy behavior of the original system. Different decompositions may lead to subproblems with different effective dimensions, linearity properties, stability behavior, and energy-related structures.  A decomposition that reduces the dimension of the subproblems can therefore be computationally attractive, while simultaneously destroying the port-Hamiltonian structure. Conversely, a structure-preserving decomposition may retain the desired energy behavior but offer less potential for dimension reduction, parallelization or exploitation of different time scales.

Classical operator splitting schemes of order $p\geq 3$ with real-valued coefficients necessarily involve negative step sizes, which can cause stability issues for irreversible systems such as PHS with dissipation, where $\bm R \neq \bm 0$, \cite{blanes2005}. See also structure-preserving integrators for dissipative systems based on reversible-irreversible splitting in \cite{shang2020}.
Structure-preserving commutator-based higher-order splitting schemes with positive coefficients have recently been developed for linear and certain subclasses of nonlinear port-Hamiltonian systems \cite{moench2025,moench2026}. These approaches are based on energy-associated decompositions as well as port-based decompositions. In particular, the energy-associated JR-decomposition \cite{frommer2026} provides a way of constructing subproblems whose individual dynamics are compatible with the energy structure. Related ideas have been transferred to index-one port-Hamiltonian differential-algebraic equations (pH-DAEs) for certain assignments of the energy parts in the constraints, \cite{bartel2025}. For coupled systems, decompositions based on the underlying subsystem structure (block structure) can reduce the effective dimension of the individual subproblems. In this setting, splitting provides an alternative to waveform relaxation (dynamic iteration) \cite{guenther2021,lelarasmee1982}, closely related to component-wise partitioning \cite{arnold2001,busch2012,kuebler2000}. Recent work on coupled linear PHS \cite{lorenz2025} exploits the block structure while retaining a port-Hamiltonian formulation. Hierarchical splitting \cite{schaefers2026} further demonstrates the flexibility of combining different decomposition principles. These developments highlight a fundamental trade-off in the choice of a decomposition. 
Preserving the port-Hamiltonian structure of each subflow is closely related to preserving passivity and the associated energy behavior, whereas decompositions aimed primarily at computational efficiency may produce subproblems that no longer possess this structure. For multiphysical systems with separated time scales, an additional possibility is to choose the decomposition specifically to enable multiple time stepping. In multiple-time-stepping approaches, fast subflows are resolved with smaller time steps while slow components are advanced on a larger macro time scale, see, e.g., \cite{biesiadecki1993, grubmueller1991}. 
Despite the developments, there is currently no systematic criterion that connects the structural properties of a chosen decomposition with the energy behavior of the resulting splitting method, while simultaneously accounting for computational efficiency. Such a criterion is particularly relevant for coupled port-Hamiltonian systems, where structure preservation, subsystem complexity, and time-scale separation may lead to competing design objectives.

The aim of this paper is to investigate this trade-off from the perspective of energy consistency. The concept of energy consistency has been previously introduced and discussed, e.g., in the context of symplectic integration with collocation methods \cite{kotyczka2019} and Petrov-Galerkin schemes \cite{egger2021,giesselmann2025}. In \cite{celledoni2017}, energy-preserving and passivity-consistent numerical discretizations of PHS are developed via discrete gradient and splitting methods. In this paper, rather than introducing a particular splitting decomposition, we use energy consistency as a common criterion for analyzing different decomposition strategies. This allows us to distinguish decompositions according to whether the resulting subflows preserve the structural mechanisms underlying the continuous power balance, and to relate these properties to the computational characteristics of the corresponding splitting methods.
The main contributions of this work are as follows. We formulate a concept of energy consistency for numerical integration schemes applied to pH-ODEs and derive sufficient conditions under which a splitting method based on a given decomposition is energy-consistent. We use these conditions to systematically analyze several decomposition strategies for (coupled) port-Hamiltonian systems, including energy-associated, port-based, subsystem-based, diagonal, and time-scale decompositions. Energy consistency yields a discrete version of the power balance. We quantify the approximation quality of a discrete power balance by the order of energy consistency $q$. We show that, for classical real-valued splitting schemes with $p\leq 2$, this order is determined by the consistency order, i.e., $q=p$, whereas commutator-based splitting may exhibit a reduced order due to the induced quadrature rule, i.e., $q\leq p$. Moreover, we discuss the influence of numerical subflow approximations. In particular, Gauss collocation schemes for quadratic Hamiltonians \cite{hairer2006} and discrete gradient methods \cite{gonzalez1996, kinon2026} are covered by the proposed framework.
The analysis is augmented by an assessment of the computational efficiency of splitting methods based on the different decomposition strategies, with particular emphasis on reduced effective subsystem dimensions, parallelization potential and the exploitation of separated time scales. Numerical experiments based on Strang splitting and its multiple-time-stepping extension are performed, using a scalable electro-thermal benchmark that retains a port-Hamiltonian formulation and combines fast electrical with slow thermal dynamics.

The remainder of the paper is organized as follows. Section~\ref{sec:energy_consistency} develops the concept of energy consistency for splitting methods. Section \ref{sec:decomposition} analyzes different decomposition strategies with respect to their structural properties, energy consistency, and computational implications, and discusses their hierarchical use. Section \ref{sec:numerical_results} presents numerical experiments for an electro-thermal circuit model, including the multiple-time-stepping setting. Conclusions and perspectives are given in Section \ref{sec:conclusion}. Appendix~\ref{app:DGM} and Appendix~\ref{app:thermal-electric-modeling} provide details on discrete gradient methods and the port-Hamiltonian modeling of the electro-thermal benchmark problem, respectively.

\section{Concept of Energy Consistency}\label{sec:energy_consistency}

In the spirit of geometric numerical integration, we require that the numerical solution obtained by a numerical integration scheme reflects the energy behavior of the exact solution of a pH-ODE through a discrete power balance. We formalize this demand on the scheme through the following concept of energy consistency.

\begin{definition}[Energy consistency]\label{def: energy-consistent}
A numerical integration scheme $\bm{\Psi}$ for the pH-ODE \eqref{eq:pH-ODE} is said to be \emph{energy-consistent} if for every initial value $\bm{x}_0 \in \R^n$ and for every step size $h>0$ the numerical approximation $\bm{x}_1 = \bm{\Psi}_{t_1,t_0}(\bm{x}_0)$, $t_1=t_0+h$, admits a decomposition 
$$ \mathcal{H}(\bm{x}_1) - \mathcal{H}(\bm{x}_0) = \mathcal{D}_h + \mathcal{S}_h $$
into a dissipated energy $\mathcal{D}_h$ and a supplied energy $\mathcal{S}_h$ such that
\begin{enumerate}
  \item[(i)] $\mathcal{D}_h \le 0$, with $\mathcal{D}_h = 0$ whenever $\bm{R} \equiv \bm{0}$, and $\mathcal{S}_h = 0$ whenever $\bm{B} \equiv \bm{0}$;
  \item[(ii)] $\mathcal{D}_h$ and $\mathcal{S}_h$ are consistent with the continuous-time power balance \eqref{eq:power_balance}, i.e.,
  $$ \lim_{h \to 0} \frac{\mathcal{D}_h}{h} = -\bm{z}(\bm{x}_0)^\top \bm{R}(\bm{x}_0)\, \bm{z}(\bm{x}_0),
    \qquad
    \lim_{h \to 0} \frac{\mathcal{S}_h}{h} = \bm{y}(t_0)^\top \bm{u}(t_0). $$
\end{enumerate}
    We call this numerical integration scheme $\bm{\Psi}$ \emph{energy-consistent of order $q$}, $q\in \mathbb{N}$, if it holds 
    \begin{align*}
        \mathcal{D}_h &= \int_{t_0}^{t_0 + h}\hspace*{-0.2cm} -\bm{z}(\bm{x}(\tau))^\top \bm{R}(\bm{x}(\tau)) \bm{z}(\bm{x}(\tau)) \, \diff \tau + \mathcal{O}(h^{q+1}), \quad \mathcal{S}_h = \int_{t_0}^{t_0 + h} \bm{y}(\tau)^\top \bm{u}(\tau) \, \diff \tau + \mathcal{O}(h^{q+1}),
    \end{align*}
    where $\bm x(t)=\bm{\varphi}_{t,t_0}(\bm x_0)$ denotes the exact solution of \eqref{eq:pH-ODE}. We denote the exact dissipated energy by $\mathcal{D}^\star$ and the exact supplied energy by $\mathcal{S}^\star$.
\end{definition}

\begin{remark}
If $\bm{R}=\bm{0}$, then $\mathcal{D}_h=0$ and energy consistency yields the discrete energy balance
$\mathcal{H}(\bm{x}_1)-\mathcal{H}(\bm{x}_0)=\mathcal{S}_h$.
Thus, the numerical scheme satisfies a discrete passivity relation with storage function $\mathcal{H}$ and discrete supply $\mathcal{S}_h$, providing a discrete counterpart of the continuous-time power balance \eqref{eq:power_balance} \cite{khalil2002}.

If $\bm{B}=\bm{0}$, then $\mathcal{S}_h=0$ and energy consistency gives
$\mathcal{H}(\bm{x}_1)-\mathcal{H}(\bm{x}_0)=\mathcal{D}_h\leq 0$.
Hence, the numerical scheme inherits the dissipative property of the continuous system with respect to the Hamiltonian. 
Moreover, if $\bm{x}^\star$ is an equilibrium of the continuous system, $\mathcal{H}-\mathcal{H}(\bm{x}^\star)$ is a Lyapunov function for $\bm{x}^\star$, and the numerical integration scheme preserves this equilibrium, i.e., $\bm \Psi_{t_1,t_0}(\bm{x}^\star)=\bm{x}^\star$ for every step size $h>0$, then the dissipative property implies Lyapunov stability of $\bm{x}^\star$ under the numerical scheme.

If $\bm{R}=\bm{0}$ and $\bm{B}=\bm{0}$, then $\mathcal{D}_h=\mathcal{S}_h=0$, and the method is conservative,
$\mathcal{H}(\bm{x}_1)-\mathcal{H}(\bm{x}_0)=0$.
\end{remark}

\subsection{Splitting Methods}

The core idea of splitting methods is to decompose the right-hand side of the dynamic system and to approximate the analytic flow $\bm{\varphi}$ of the overall system by composing the flows  $\bm{\varphi}^{[i]}$ of the corresponding subproblems.  A $s$-stage splitting scheme of consistency order $p$ for an autonomous system decomposed into two subproblems has then the form \cite{blanes2024, mclachlan2002}
\begin{equation*}
  \bm{\Psi}_{h}
  =
  \bm{\varphi}^{[2]}_{b_sh}
  \circ
  \bm{\varphi}^{[1]}_{a_sh}
  \circ \,\cdots \,\circ
  \bm{\varphi}^{[2]}_{b_1}
  \circ
  \bm{\varphi}^{[1]}_{a_1}, \qquad \qquad  \bm{\Psi}_h(\bm{x}_0)=\bm{\varphi}_h(\bm{x}_0)+\mathcal{O}(h^{p+1})
\end{equation*}
with suitably chosen step size coefficients $a_1,\,\ldots,\,a_s$ and $b_1,\,\ldots,\,b_s$. In particular, the scheme is consistent if $\sum_{j=1}^s a_j = \sum_{j=1}^s b_j = 1$ is satisfied. The most prominent schemes are the first-order Lie-Trotter splitting \cite{trotter1959} and the symmetric second-order Strang splitting \cite{strang1968} 
\begin{equation}\label{eq:Strang}
    \bm{\Psi}_h
    =
    \bm{\varphi}^{[i]}_{h/2}
    \circ
    \bm{\varphi}^{[j]}_{h}
    \circ
    \bm{\varphi}^{[i]}_{h/2},
    \qquad
    i,j\in\{1,2\},\quad i\neq j.
\end{equation}
For general systems, classical splitting methods of order \(p\geq 3\) with real-valued coefficients necessarily involve negative time steps, i.e., at least one $a_j$ and one $b_j$ are negative, \cite{blanes2005}. This may lead to stability issues for pH-ODEs with dissipation $\bm{R}\neq \bm{0}$ and may therefore impose step-size restrictions on $h$. To preserve the dissipativity for all $h> 0$, higher-order commutator-based schemes with positive coefficients have been developed, \cite{moench2025, moench2026}. The classical order conditions arising from the Baker--Campbell--Hausdorff formula are here fulfilled by the interplay of problem-specific, decomposition-dependent commutators and suitable adapted coefficients.

In this work, we consider non-autonomous systems and deal with subproblems of the form
\begin{equation}\label{eq: Decomposition}
      \splitpart{i}: \quad \bm{E}(\bm{x}) \dot{\bm{x}} = \bm{f}^{[i]}(t,\bm{x}),\,\, i=1,2,  \qquad  \quad  \bm{f}^{[1]}(t,\bm{x})+\bm{f}^{[2]}(t,\bm{x})=\bm{f}(t,\bm{x}).
\end{equation}
The explicit time-dependence is accounted for in the initialization times of the subflows so that the splitting scheme takes the form
\begin{equation}\label{eq:splitting_method}
  \bm{\Psi}_{t_0+h,t_0}
  =
  \bm{\varphi}^{[2]}_{t_0+hB_{s}, t_0+hB_{s-1}}
  \circ
  \bm{\varphi}^{[1]}_{t_0+hA_{s}, t_0+hA_{s-1}}
  \circ \,\cdots \,\circ
  \bm{\varphi}^{[2]}_{t_0+hB_1,t_0}
  \circ
  \bm{\varphi}^{[1]}_{t_0+hA_1,t_0}
\end{equation}
with cumulative step size coefficients $A_k=\sum_{\ell=1}^k a_\ell$ and $B_k=\sum_{\ell=1}^k b_\ell$ for $k=1,...,s$.

\begin{lemma} \label{lemma: energy-consistent splitting}
    Let $\system$ be a pH-ODE \eqref{eq:pH-ODE} with Hamiltonian $\mathcal{H}\in \mathcal{C}^2(\mathbb{R}^n,\mathbb{R})$, flow matrix function $\bm E \in \mathcal{C}^1(\mathbb{R}^n,\mathbb{R}^{n\times n})$, port function $\bm B \in \mathcal{C}^1(\mathbb{R}^n,\mathbb{R}^{n\times m})$, input $\bm u \in \mathcal{C}^1([t_0,T],\mathbb{R}^m)$ and $\bm{f}:[t_0,T]\times \mathbb{R}^n\rightarrow \mathbb{R}^n$ Lipschitz continuous in $\bm x$. A consistent splitting method $\bm{\Psi}$ \eqref{eq:splitting_method} based on a decomposition of $\system$ into two subproblems is energy-consistent if 
    \begin{enumerate}[label=\roman*)]
        \item both subproblems $\splitpart{i}$, $i=1,2$, are described by pH-ODEs  of the form 
        \begin{align*}
        \bm{E}(\bm{x}) \dot{\bm{x}} & = \big( \bm{J}_i(\bm{x}) - \bm{R}_i(\bm{x}) \big) \bm{z}(\bm{x}) + \bm{B}_i(\bm{x}) \bm{u}(t)= \bm{f}^{[i]}(t,\bm{x}), \qquad \bm{y}_i  = \bm{B}_i(\bm{x})^\top \bm{z}(\bm{x}),
        \end{align*}
        with $\bm{J}_i(\bm{x}) = - \bm{J}_i(\bm{x})^\top$,  $\bm{R}_i(\bm{x}) = \bm{R}_i(\bm{x})^\top \succeq \bm{0}$ as well as $\bm{A}(\bm{x})=\bm{A}_1(\bm{x})+\bm{A}_2(\bm{x})$ for all $\bm{x} \in \R^{n}$, for all $\bm{A}\in \{ \bm{J},\bm{R},\bm{B}\} $
        and the corresponding output $\bm{y}_i$; 
        \item the subproblem vector fields $\bm{f}^{[i]}$ are locally Lipschitz continuous in $\bm{x}$, and $\bm B_i$ continuously differentiable, $i=1,2$;
        \item given $\bm{B} = \bm{0}$, then $\bm{B}_1 =\bm{B}_2 =  \bm{0}$;
        \item given $\bm{R}_1 \neq \bm{0}$, then $a_j \geq 0$ for all $j$; 
        \item given $\bm{R}_2 \neq \bm{0}$, then $b_j \geq 0$ for all $j$.
    \end{enumerate}
\end{lemma}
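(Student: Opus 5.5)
The plan is a telescoping argument over the $2s$ subflows of \eqref{eq:splitting_method}. Write the intermediate states as $\bm{x}^{(0)}=\bm{x}_0$, then $\bm{x}^{(1)},\dots,\bm{x}^{(2s)}=\bm{x}_1$, alternating subflows of $\splitpart{1}$ and $\splitpart{2}$. Along each exact subflow $\bm{\xi}(\tau)$ of $\splitpart{i}$ on a subinterval $[t_0+hC_{k-1},t_0+hC_k]$ (with $C=A$ or $B$), I use $\bm{E}^\top\bm{z}=\nabla\mathcal{H}$ and the skew-symmetry of $\bm{J}_i$ to get the subsystem power balance
\begin{equation*}
\tfrac{\mathrm{d}}{\mathrm{d}\tau}\mathcal{H}(\bm{\xi}) = -\bm{z}(\bm{\xi})^\top\bm{R}_i(\bm{\xi})\bm{z}(\bm{\xi}) + \bm{z}(\bm{\xi})^\top\bm{B}_i(\bm{\xi})\bm{u}(\tau).
\end{equation*}
Hypothesis ii) makes the subflows well defined; local Lipschitz continuity gives local existence, and I restrict to $h$ small or rely on the boundedness coming from the energy estimate. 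Integrating in $\tau$ from the start time to the end time of the subinterval (this may be a backward integral when $c_k<0$) and summing over all pieces yields $\mathcal{H}(\bm{x}_1)-\mathcal{H}(\bm{x}_0)=\mathcal{D}_h+\mathcal{S}_h$. Here $\mathcal{D}_h$ is the sum of the dissipation integrals and $\mathcal{S}_h$ the sum of the supply integrals.

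For condition (i) of Definition~\ref{def: energy-consistent}, consider first a piece of $\splitpart{1}$ with $\bm{R}_1\neq\bm{0}$. By iv), $a_j\ge 0$, so the integration interval is oriented forward and the integrand is $\le 0$; the same holds for $\splitpart{2}$ via v). If $\bm{R}_i=\bm{0}$, the term vanishes regardless of the sign of the coefficient. Thus $\mathcal{D}_h\le 0$. If $\bm{R}\equiv\bm{0}$, then $\bm{R}_1+\bm{R}_2=\bm{0}$ with both summands positive semidefinite forces $\bm{R}_1=\bm{R}_2=\bm{0}$, hence $\mathcal{D}_h=0$. If $\bm{B}=\bm{0}$, hypothesis iii) gives $\bm{B}_i=\bm{0}$, hence $\mathcal{S}_h=0$.

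For condition (ii), I divide by $h$. Each piece contributes $c_k h\,(-\bm{z}^\top\bm{R}_i\bm{z})(\bm{x}_0)+o(h)$, using continuity of the integrands: $\mathcal{H}\in\mathcal{C}^2$ and $\bm{E}\in\mathcal{C}^1$ regular give continuous $\bm{z}$, $\bm{B}_i\in\mathcal{C}^1$, $\bm{u}\in\mathcal{C}^1$, and the intermediate states satisfy $\bm{x}^{(k)}-\bm{x}_0=\mathcal{O}(h)$ by Lipschitz/Gronwall bounds on the subflows. Summing and using consistency, $\sum a_j=\sum b_j=1$, gives $-\bm{z}^\top(\bm{R}_1+\bm{R}_2)\bm{z}=-\bm{z}^\top\bm{R}\bm{z}$ at $\bm{x}_0$. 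Likewise the supply terms sum to $\bm{z}^\top(\bm{B}_1+\bm{B}_2)\bm{u}(t_0)=\bm{y}(t_0)^\top\bm{u}(t_0)$.

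The main obstacle is justifying the uniform $\mathcal{O}(h)$ closeness of all intermediate states to $\bm{x}_0$ and the continuity of the integrands near $\bm{x}_0$. Negative coefficients in undissipative subproblems mean backward subflows, so I need local Lipschitz bounds in both time directions on a fixed neighborhood of $\bm{x}_0$, valid for $h$ sufficiently small. A secondary point is the sign bookkeeping for backward pieces, where only $\bm{R}_i=\bm{0}$ saves the argument.
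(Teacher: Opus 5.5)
Your proposal is correct and matches the paper's proof almost step by step: you telescope the subflow power balances over the $2s$ sub-steps, get $\mathcal{D}_h\le 0$ from the forward orientation guaranteed by iv) and v) whenever $\bm{R}_i\neq\bm{0}$, handle the degenerate cases via positive semidefiniteness of $\bm{R}_1,\bm{R}_2$ and hypothesis iii), and obtain the limits by expanding to first order around $\bm{x}_0$ together with $\sum_j a_j=\sum_j b_j=1$. The well-posedness of the possibly backward subflows, which you flag, is simply asserted in the paper as well; note that the sign statements in (i) hold for every $h$ for which the composition is defined, so you only need small $h$ for the limits in (ii).
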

\begin{proof}
Both subproblems $\splitpart{i}$ equipped with an initial value possess a unique solution satisfying a respective power balance in terms of $\mathcal{H}$, i.e.
$$ \tfrac{\diff}{\diff t} \mathcal{H}(\bm{x}(t)) = - \bm{z}(\bm{x}(t))^\top \bm{R}_i(\bm{x}(t)) \bm{z}(\bm{x}(t)) + \bm{y}_i(t)^\top \bm{u}(t) \leq \bm{y}_i(t)^\top \bm{u}(t). $$
For the given splitting method, the cumulative step size coefficients $ A_k$ and  $B_k$, $k=1,...,s$, fulfill $A_s=B_s=1$ because of the scheme's consistency. The intermediate time points $\tau_j$, $j=1,...,2s$, encountered during the time step from $t_0$ to $t_0+h$ are
    $$ \tau_j = \begin{cases}
        t_0 + h B_{\nicefrac{j}{2}}, & j \; \text{even}, \\
        t_0 + h A_{\nicefrac{(j+1)}{2}}, & j \; \text{odd},
    \end{cases} $$ 
with $\tau_{-1}=\tau_0=t_0$, where the subproblem $\splitpart{i}$ is linked to the index $j$ via 
  \begin{align*}
        i &= i(j) = \begin{cases}
            2, & j \; \text{even}, \\
            1, & j \; \text{odd}. \\
        \end{cases}
    \end{align*}
Let $\bm x_{\nicefrac{j}{2s}}$ denote the intermediate solution of the $j$-th sub-step from $\tau_{j-2}$ to $\tau_j$ initialized with $\bm{x}_{\nicefrac{j}{2s}}(\tau_{j-2})=\bm{x}_{\nicefrac{(j-1)}{2s}}(\tau_{j-1})$, i.e.,
$\bm x_{\nicefrac{j}{2s}}(\tau)= \bm{\varphi}^{[i(j)]}_{\tau,\tau_{j-2}}(\bm x_{\nicefrac{(j-1)}{2s}}(\tau_{j-1}))$.
It satisfies the integrated power balance 
    \begin{align*}
        \mathcal{H}(\bm{x}_{\nicefrac{j}{2s}}(\tau_j)) - \mathcal{H}(\bm{x}_{\nicefrac{(j-1)}{2s}}(\tau_{j-1})) &= \int_{\tau_{j-2}}^{\tau_j} \hspace*{-0.5cm}- \bm{z}( \bm{x}_{\nicefrac{j}{2s}}(\tau) )^\top \bm{R}_{i(j)}( \bm{x}_{\nicefrac{j}{2s}}(\tau)  ) \bm{z}( \bm{x}_{\nicefrac{j}{2s}}(\tau) )
        + \bm{y}_{i(j)}(\tau)^\top \bm{u}(\tau) \, \mathrm{d}\tau \\
         &\leq \int_{\tau_{j-2}}^{\tau_j} \bm{y}_{i(j)}(\tau)^\top \bm{u}(\tau) \, \mathrm{d}\tau,
    \end{align*}
since $\tau_{j-2} \le \tau_j$, if $\bm{R}_{i(j)} \neq \bm{0}$.
Hence, we obtain
\begin{align} \label{eq:splitting-method_power-balance} \nonumber
        \mathcal{H}(\bm{x}_1) - \mathcal{H}(\bm{x}_0) &= \sum_{j=1}^{2s} \mathcal{H}(\bm{x}_{\nicefrac{j}{2s}}(\tau_j)) - \mathcal{H}(\bm{x}_{\nicefrac{(j-1)}{2s}}(\tau_{j-1})) =    \mathcal{D}_h + \mathcal{S}_h \leq  \mathcal{S}_h \\
        \text{with} \quad \mathcal{D}_h & = \sum_{j=1}^{2s} \int_{\tau_{j-2}}^{\tau_j} - \bm{z}( \bm{x}_{\nicefrac{j}{2s}}(\tau) )^\top \bm{R}_{i(j)}( \bm{x}_{\nicefrac{j}{2s}}(\tau)  ) \bm{z}( \bm{x}_{\nicefrac{j}{2s}}(\tau) ) \, \mathrm{d}\tau \leq 0,\\
 \mathcal{S}_h &=  \sum_{j=1}^{2s} \int_{\tau_{j-2}}^{\tau_j}   \bm{y}_{i(j)}(\tau)^\top \bm{u}(\tau) \, \mathrm{d}\tau =  \sum_{j=1}^{2s} \int_{\tau_{j-2}}^{\tau_j} \bm{z}(\bm{x}_{\nicefrac{j}{2s}}(\tau))^\top \bm{B}_{i(j)}(\bm{x}_{\nicefrac{j}{2s}}(\tau)) \bm{u}(\tau) \, \diff \tau . \nonumber
        \end{align}
For $\bm B=\bm 0$, $\bm B_1=\bm B_2=\bm 0$ holds by assumption, hence we find $\mathcal{S}_h=0$. For $\bm R=\bm 0$, we have $\bm R_1=\bm R_2=\bm 0$ because of $\bm{R}_i(\bm{x})\succeq \bm{0}$ for all $\bm x$. Thus, $\mathcal{D}_h=0$ holds.

It remains to show that $\mathcal{D}_h$ and $\mathcal{S}_h$ are consistent approximations of the continuous dissipated energy and supplied energy, respectively.    
Taylor expansions with the imposed regularity assumptions yield  $\bm{x}_{\nicefrac{j}{2s}}(t) = \bm{x}_0 + \mathcal{O}(h)$, $\bm{y}_{i}(t) = \bm{y}_i(t_0) +  \mathcal{O}(h)$, and $\bm{u}(t) = \bm{u}(t_0) +  \mathcal{O}(h)$ for all $t \in [\min_j \tau_j, \max_j \tau_j]$. Hence, we get
  \begin{align*}
  \mathcal{D}_h
  &= -h\Big(\textstyle\sum_j a_j\Big)\, (\bm{z}^\top \bm{R}_1\, \bm{z}) \big|_{\bm{x}_0}
    -h\Big(\textstyle\sum_j b_j\Big)\, (\bm{z}^\top \bm{R}_2\, \bm{z} )\big|_{\bm{x}_0}
    + \mathcal{O}(h^2)
  = -h\, (\bm{z}^\top \bm{R}\, \bm{z} )\big|_{\bm{x}_0} + \mathcal{O}(h^2), \\
  \mathcal{S}_h
  &= \phantom{-}h\Big(\textstyle\sum_j a_j\Big)\, (\bm{y}_1^{\top} \bm{u} )\big|_{t_0}
    +h\Big(\textstyle\sum_j b_j\Big)\, (\bm{y}_2^{\top} \bm{u}) \big|_{t_0}
    + \mathcal{O}(h^2)
  = h\, (\bm{y}^\top \bm{u}) \big|_{t_0} + \mathcal{O}(h^2),
 \end{align*}
using the consistency condition $\sum_j a_j = \sum_j b_j = 1$ together with $\bm{R}_1+\bm{R}_2 = \bm{R}$ and $(\bm{y}_1 + \bm{y}_2)(t_0) = ((\bm{B}_1+\bm{B}_2)(\bm x_0))^\top \bm{z}(\bm x_0) = \bm{y}(t_0)$. Dividing by $h$ and letting $h\to 0$ yields the desired result
$$
  \lim_{h\to0}\frac{\mathcal{D}_h}{h} = -\bm{z}(\bm{x}_0)^\top \bm{R}(\bm{x}_0)\,\bm{z}(\bm{x}_0),
  \qquad
  \lim_{h\to0}\frac{\mathcal{S}_h}{h} = \bm{y}(t_0)^\top \bm{u}(t_0).
    $$
\end{proof}

\begin{proposition}\label{prop: SplittingOrder2}
Let the hypotheses of Lemma \ref{lemma: energy-consistent splitting} and the following regularity assumptions hold: Hamiltonian $\mathcal{H}\in \mathcal{C}^{r+1}(\mathbb{R}^n,\mathbb{R})$, flow matrix function $\bm E \in \mathcal{C}^{r}(\mathbb{R}^n,\mathbb{R}^{n\times n})$, port functions $\bm B, \bm B_i \in \mathcal{C}^r(\mathbb{R}^n,\mathbb{R}^{n\times m})$, input $\bm u \in \mathcal{C}^r([t_0,T],\mathbb{R}^m)$ and $\bm{f},\bm f_i:[t_0,T]\times \mathbb{R}^n\rightarrow \mathbb{R}^n$ Lipschitz continuous in $\bm x$ for $r=1$ and continuously differentiable for $r=2$.
Let the splitting scheme $\bm{\Psi}$ be of consistency order $p\leq 2$ and $r=p$, then it is energy-consistent of order $q=p$.
\end{proposition}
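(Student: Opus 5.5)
The plan is to reuse the representation from the proof of Lemma~\ref{lemma: energy-consistent splitting}. There, $\mathcal{D}_h$ and $\mathcal{S}_h$ are written as sums of integrals over the sub-intervals $[\tau_{j-2},\tau_j]$ of the power terms $-\bm z^\top\bm R_{i(j)}\bm z$ and $\bm z^\top\bm B_{i(j)}\bm u$, evaluated along the intermediate subflow solutions $\bm x_{\nicefrac{j}{2s}}$. These are compared with the exact quantities $\mathcal{D}^\star=\int_{t_0}^{t_0+h}-\bm z^\top\bm R\bm z\,\diff\tau$ and $\mathcal{S}^\star=\int_{t_0}^{t_0+h}\bm y^\top\bm u\,\diff\tau$ along $\bm x(\tau)=\bm\varphi_{\tau,t_0}(\bm x_0)$. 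The integrands are sufficiently smooth: $\bm z=\bm E^{-\top}\nabla\mathcal H\in\mathcal C^{r}$, $\bm R_i=\bm R-\dots$, $\bm B_i\in\mathcal C^r$, and $\bm u\in\mathcal C^r$. We therefore expand each integrand in a Taylor series around $(t_0,\bm x_0)$ up to order $r-1=p-1$ in $\tau-t_0$. After integration, the remainder contributes $\mathcal O(h^{p+1})$. The regularity assumption on $\bm f,\bm f_i$ (Lipschitz for $r=1$, $\mathcal C^1$ for $r=2$) guarantees that the trajectories are $\mathcal C^r$ in time, so these expansions are legitimate.

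\emph{Case $p=1$.} Since every sub-interval has length $\mathcal O(h)$ and all trajectories satisfy $\bm x=\bm x_0+\mathcal O(h)$, each integrand is constant up to $\mathcal O(h)$. The computation in the proof of Lemma~\ref{lemma: energy-consistent splitting} then gives $\mathcal D_h=-h(\bm z^\top\bm R\bm z)|_{\bm x_0}+\mathcal O(h^2)$. The same expansion applied to $\mathcal D^\star$ gives the same leading term, so the difference is $\mathcal O(h^2)$, and likewise for $\mathcal S_h$. This is exactly order $q=1$.

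\emph{Case $p=2$.} This is the main work. Let $g_i(\bm x)=-\bm z^\top\bm R_i\bm z$ and $k_i(t,\bm x)=\bm z^\top\bm B_i\bm u(t)$. Expanding to first order, the $j$-th integral equals $(\tau_j-\tau_{j-2})g_i(\bm x_0)$ plus a first-order correction. That correction is $\nabla g_i(\bm x_0)^\top$ applied to the displacement of $\bm x_{\nicefrac{j}{2s}}$ from $\bm x_0$, integrated over the sub-interval. The displacement is the accumulated effect of all previous substeps plus the current one, namely $\bm E^{-1}$ times the sum of $\bm f^{[i]}$ weighted by the partial times, all up to $\mathcal O(h^2)$. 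The exact counterpart contributes $\tfrac{h^2}{2}\nabla g(\bm x_0)^\top\bm E^{-1}\bm f(t_0,\bm x_0)$ with $g=g_1+g_2$. Matching the $h^2$ coefficients therefore reduces to algebraic identities in the coefficients $a_j,b_j$: the constraints $\sum a_j=\sum b_j=1$ together with the second-order condition. For two-operator splitting the latter is $\sum_{k}b_kA_k=\tfrac12$, or equivalently the symmetric counterpart $\sum_k a_kB_{k-1}=\tfrac12$. For time-dependent terms one additionally needs $\sum_k a_k(A_k+A_{k-1})/2=\tfrac12$, which holds trivially by telescoping.

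Concretely, I would write the $h^2$ coefficient of $\mathcal D_h$ as a bilinear expression in the cross terms $\nabla g_1\cdot\bm E^{-1}\bm f^{[2]}$ and $\nabla g_2\cdot\bm E^{-1}\bm f^{[1]}$, weighted by $\sum_k a_kB_{k-1}$ and $\sum_kb_kA_k$. The diagonal terms $\nabla g_i\cdot\bm E^{-1}\bm f^{[i]}$ are weighted by $\sum_k a_k^2/2+a_k A_{k-1}=\tfrac12A_s^2=\tfrac12$, and analogously for the $b_k$. The identity $\sum_ka_kB_{k-1}+\sum_kb_kA_k=A_sB_s=1$ combined with second-order consistency $\sum_kb_kA_k=\tfrac12$ makes every weight equal to $\tfrac12$, which matches the exact expansion. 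The explicit time dependence through $\bm u$ is handled the same way, because the subflows are started at the correct times $\tau_{j-2}$.

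The main obstacle is the bookkeeping in the $p=2$ case: tracking the intermediate states $\bm x_{\nicefrac{j}{2s}}(\tau)$ to first order, which also requires expanding $\bm E(\bm x)^{-1}$, and verifying that the resulting weights reduce to the order-two conditions. The remainder bounds themselves are routine given $r=2$.
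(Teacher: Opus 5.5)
Your coefficient algebra is correct and matches the identities the paper uses: $\sum_k(a_kA_{k-1}+\tfrac12 a_k^2)=\sum_k a_kB_{k-1}=\sum_k b_kA_k=\tfrac12$, together with the telescoping weight for the explicit time dependence. Your treatment of $\mathcal S_h$ is therefore fine. The gap is the direct expansion of $\mathcal D_h$. Writing $g_i=-\bm z^\top\bm R_i\bm z$ as $g_i(\bm x_0)+\nabla g_i(\bm x_0)^\top(\bm x-\bm x_0)+\mathcal O(h^2)$ requires $g_i\in\mathcal C^2$, or at least $\nabla g_i$ Lipschitz. The proposition imposes no regularity at all on $\bm R$ or $\bm R_i$, and your ``$\bm R_i=\bm R-\dots$'' is exactly the unsupported step. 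The only handle the hypotheses give is the skew-symmetry identity $g_i(\bm x)=\bm z(\bm x)^\top\bm f^{[i]}(t,\bm x)-\bm z(\bm x)^\top\bm B_i(\bm x)\bm u(t)$. For $r=2$ this is merely $\mathcal C^1$, because $\bm f^{[i]}$ is only assumed $\mathcal C^1$. The first-order Taylor remainder is then only $o(\|\bm x-\bm x_0\|)=o(h)$, so your argument yields $\mathcal D_h=\mathcal D^\star+o(h^2)$ rather than $\mathcal O(h^3)$. For $p=1$ the same identity makes $g_i$ locally Lipschitz, so that case goes through once you justify it this way.

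The missing idea is to never expand $\mathcal D_h$ at all. By the subflow power balances of Lemma~\ref{lemma: energy-consistent splitting}, $\mathcal D_h=\bigl(\mathcal H(\bm x_1)-\mathcal H(\bm x_0)\bigr)-\mathcal S_h$. The assumed consistency order $p$ and the local Lipschitz continuity of $\mathcal H$ give $\mathcal H(\bm x_1)-\mathcal H(\bm x_0)=\mathcal H(\bm x(t_0+h))-\mathcal H(\bm x_0)+\mathcal O(h^{p+1})$. By the exact power balance this equals $\mathcal D^\star+\mathcal S^\star+\mathcal O(h^{p+1})$, hence $\mathcal D_h-\mathcal D^\star=(\mathcal S^\star-\mathcal S_h)+\mathcal O(h^{p+1})$. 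Thus only $\mathcal S_h$ needs Taylor bookkeeping. Its integrand $\bm z^\top\bm B_i\bm u$ is $\mathcal C^r$ precisely under the stated hypotheses ($\mathcal H\in\mathcal C^{r+1}$, $\bm E,\bm B_i\in\mathcal C^r$, $\bm u\in\mathcal C^r$). This is the paper's route, and your $\mathcal S_h$ computation plugs into it unchanged. Your direct route would close only under an extra assumption such as $\bm R_1,\bm R_2\in\mathcal C^2$, which the statement does not provide.
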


\begin{proof}
According to Lemma \ref{lemma: energy-consistent splitting}, any energy-consistent splitting method $\bm{\Psi}$ satisfies \eqref{eq:splitting-method_power-balance}.  
The intermediate solution $\bm{x}_{\nicefrac{j}{2s}}$ can be expressed by the exact subflows as
\begin{align*}
    \bm{x}_{\nicefrac{j}{2s}}(\tau) &= \bm{x}_0 + \sum_{k=1}^{j-1} \int_{\tau_{k-2}}^{\tau_{k}}   \bm E^{-1}(\bm x(s)) \bm{f}^{[i(k)]}(s,\bm x(s)) \, \mathrm{d}s + \int_{\tau_{j-2}}^{\tau}  \bm E^{-1}(\bm x(s)) \bm{f}^{[i(j)]}(s,\bm x(s))  \, \mathrm{d}s 
\end{align*}    
for $\tau$ in the $j$-th sub-step from $\tau_{j-2}$ to $\tau_j$. Consider the case $p=2$. We approximate the integrals with a left-endpoint rectangular quadrature rule and insert the intermediate solution formula, recursively. Local expansions around $\bm x_0$ then yield 
\begin{align*}
    \bm{x}_{\nicefrac{j}{2s}}(\tau) &= \bm{x}_0 + \bm{v}_j(\tau;t_0,\bm{x}_0) + \mathcal{O}(h^2), \\
    \bm{v}_j(\tau;t_0,\bm{x}_0) &= h   \bm E^{-1}(\bm x_0) [  A_{\lceil \nicefrac{(j-1)}{2} \rceil} \bm{f}^{[1]}(t_0,\bm{x}_0) +  B_{\lfloor \nicefrac{(j-1)}{2} \rfloor} \bm{f}^{[2]}(t_0, \bm{x}_0)] \\
    &  \quad + (\tau - \tau_{j-2}) \bm E^{-1}(\bm x_0)\bm{f}^{[i(j)]}(t_0,\bm{x}_0)
\end{align*}
with Gaussian brackets $\lceil \cdot \rceil$, $\lfloor \cdot \rfloor$, and $|\tau - \tau_{j-2}|<h$.
Inserting these expressions into the supply terms of $\mathcal{S}_h$ in \eqref{eq:splitting-method_power-balance} and linearizing around $h=0$ gives
\begin{align*}
    \bm{z}(\bm{x}_{\nicefrac{j}{2s}}(\tau))^\top \bm{B}_{i(j)}(\bm{x}_{\nicefrac{j}{2s}}(\tau)) \bm{u}(\tau) =\; \bm{z}(\bm{x}_0)^\top \bm{B}_{i(j)}(\bm{x}_0) \bm{u}(t_0)  + (\tau - t_0)\, \bm{z}(\bm{x}_0)^\top \bm{B}_{i(j)}(\bm{x}_0) \dot{\bm{u}}(t_0) \\     
+  \bm{v}_j(\tau;t_0,\bm{x}_0)^\top \mathrm{D}\bm{z}(\bm{x}_0)^\top \bm{B}_{i(j)}(\bm{x}_0) \bm{u}(t_0)   + \bm{z}(\bm{x}_0)^\top \mathrm{D}\bm{B}_{i(j)}(\bm{x}_0)[\bm{v}_j(\tau;t_0,\bm{x}_0)] \bm{u}(t_0) + \mathcal{O}(h^2).
\end{align*}
Integration using the midpoint rule identity leads to
\begin{align*}
    &\int_{\tau_{j-2}}^{\tau_j} \bm{z}(\bm{x}_{\nicefrac{j}{2s}}(\tau))^\top \bm{B}_{i(j)}(\bm{x}_{\nicefrac{j}{2s}}(\tau)) \bm{u}(\tau) \, \diff \tau \\
    & =(\tau_j - \tau_{j-2}) 
    \bigg[ \bm{z}(\bm{x}_0)^\top \bm{B}_{i(j)}(\bm{x}_0) \bm{u}(t_0) + \Big( \tfrac{\tau_j + \tau_{j-2}}{2} - t_0 \Big) \bm{z}(\bm{x}_0)^\top \bm{B}_{i(j)}(\bm{x}_0) \dot{\bm{u}}(t_0) \\
    & \quad +\bm{v}_j\big(\tfrac{\tau_j + \tau_{j-2}}{2};t_0,\bm{x}_0\big)^\top \mathrm{D}\bm{z}(\bm{x}_0)^\top \bm{B}_{i(j)}(\bm{x}_0) \bm{u}(t_0)
    +   \bm{z}(\bm{x}_0)^\top \mathrm{D}\bm{B}_{i(j)}(\bm{x}_0)\big[ \bm{v}_j\big(\tfrac{\tau_j + \tau_{j-2}}{2};t_0,\bm{x}_0\big) \big] \bm{u}(t_0)  \bigg] \\
    &\quad + \mathcal{O}(h^3).
\end{align*}
Algebraic manipulations of the second-order consistency conditions arising from the Baker--Campbell--Hausdorff formula imply
\begin{align*}
    \sum\nolimits_{k=1}^s \big(a_k A_{k-1} + \tfrac{a_k^2}{2} \big) = \sum\nolimits_{k=1}^s \big( b_k B_{k-1} + \tfrac{b_k^2}{2} \big) = \sum\nolimits_{k=1}^s a_k B_{k-1} = \sum\nolimits_{k=1}^s b_k A_k = \tfrac{1}{2} ,
\end{align*}
such that summing up all expansions of the supplied energy contributions results in
\begin{align*}
    \mathcal{S}_h &=  h \bm{z}(\bm{x}_0)^\top \bm{B}(\bm{x}_0) \bm{u}(t_0) + \tfrac{h^2}{2} \Big[ \bm{z}(\bm{x}_0)^\top \bm{B}(\bm{x}_0) \dot{\bm{u}}(t_0) \\
    &\quad + (\bm E^{-1}(\bm{x}_0)\bm{f}(t_0,\bm{x}_0))^\top \mathrm{D}\bm{z}(\bm{x}_0)^\top \bm{B}(\bm{x}_0) \bm{u}(t_0) 
   + \bm{z}(\bm{x}_0)^\top \mathrm{D}\bm{B}(\bm{x}_0)  [\bm E^{-1}(\bm{x}_0)\bm{f}(t_0,\bm{x}_0)] \bm{u}(t_0) \Big] + \mathcal{O}(h^3)
\end{align*}

Consider now the exact solution $\bm x(t)=\bm{\varphi}_{t,t_0}(\bm x_0)$, $t\in [t_0,t_0+h]$. Expanding  the exact supplied energy  $\mathcal{S}^\star=\int_{t_0}^{t_0 + h} \bm{y}(\tau)^\top \bm{u}(\tau) \, \diff \tau$ with $\bm y(t)=\bm B(\bm x(t))^\top \bm z(\bm x(t))$ in $h$,  we find
  $\mathcal{S}_h = \mathcal{S}^\star + \mathcal{O}(h^3)$.
Moreover, the second-order consistency of the scheme and the regularity of $\mathcal{H}$ imply
$$ \mathcal{H}(\bm{x}_1) - \mathcal{H}(\bm{x}_0) = \mathcal{H}(\bm{x}(t_0+h)) - \mathcal{H}(\bm{x}_0) + \mathcal{O}(h^3)= \mathcal{D}^\star + \mathcal{S}^\star + \mathcal{O}(h^3)$$
with exact dissipated energy $\mathcal{D}^\star$.
Thus, we obtain 
$$ \mathcal{D}_h = (\mathcal{D}_h + \mathcal{S}_h) - \mathcal{S}_h = \big(\mathcal{H}(\bm{x}_1) - \mathcal{H}(\bm{x}_0)\big) - \mathcal{S}_h = \mathcal{D}^\star + \mathcal{O}(h^3),$$
concluding the proof. The case $p=1$ is covered. 
\end{proof}

At first glance, it might seem that assuming sufficient regularity of the pH-ODE functions, the order of consistency $p$ always carries over to the energy consistency order $q$. But this may differ in the case for higher-order splitting. Classical higher-order schemes ($p\geq 3$) involve negative step size coefficients. These schemes can only be energy-consistent for pH-ODEs without dissipation ($\bm R=\bm 0$) according to Lemma~\ref{lemma: energy-consistent splitting}, then $\mathcal{D}_h=0$, and indeed $q=p$ under certain regularity assumptions. Higher-order energy-consistent splitting schemes for pH-ODEs with dissipation ($\bm R\neq \bm 0$) are designed on the basis of commutators and/or special decompositions \cite{moench2025}. These schemes are not of higher order for general vector fields $\bm f$. The step size coefficients are positive and sum up to one for each subproblem (consistency condition), but they do not fulfill all other order conditions from the Baker--Campbell--Hausdorff formula, instead some conditions vanish due to the introduced commutators or the Lie derivatives of the considered subflows. In this sense, the step size coefficients induce a quadrature rule, but not necessarily of same order, i.e. $\mathcal{S}_h=\mathcal{S}^\star + \mathcal{O}(h^{q+1})$, $q\leq p$, cf.\ Example~\ref{ex:order}.

\begin{example}\label{ex:order}
Consider a pH-ODE \eqref{eq:pH-ODE} with $\mathcal{H}(\bm x)=\tfrac{1}{2} \bm x^\top \bm Q \bm x$, $\bm E=\bm I$ and constant system matrices $\bm J$, $\bm R$, $\bm B$.
The port-based splitting approach from \cite{moench2025} is based on a decomposition where the first subproblem contains the ports ($\splitpart{1}:$  $\dot{\bm x}=\bm B \bm u(t)$, $\dot t=0$) and the second subproblem the inner dynamics ($\splitpart{2}:$  $\dot{\bm x}=(\bm J-\bm R) \bm Q \bm x$, $\dot t=1$). This decomposition causes the vanishing of several higher-order commutators such that fourth order can be achieved with symmetric 3-stage schemes and sixth order with symmetric 4-stage schemes. But it only induces a frozen-time quadrature-type approximation for the discrete power balance, i.e.,
\begin{align*}
 \mathcal{H}(\bm{x}_1) - \mathcal{H}(\bm{x}_0)\leq \mathcal{S}_h= \sum_{k=1}^{s} \int_{\tau_{(2k-1)-2}}^{\tau_{2k-1}}   \bm{y}_{i(2k-1)}(\tau)^\top \mathrm{d}\tau \, \bm{u}(\tau_{2k-2}). 
 \end{align*}
 
Figure~\ref{fig:energy-consistency-pbs} illustrates the approximation quality of state, Hamiltonian, dissipated energy and supplied energy for the port-based splitting schemes \texttt{PBS4} and \texttt{PBS6} applied to the damped and driven harmonic oscillator,
  \begin{equation*}
        \dot{\bm{x}} = \left[\begin{pmatrix}
            0 & -1 \\ 1 & 0
        \end{pmatrix} - \begin{pmatrix}
            d & 0 \\ 0 & 0
        \end{pmatrix}\right] \begin{pmatrix}
            1 & 0 \\ 0 & k
        \end{pmatrix} \bm{x} + \begin{pmatrix}
            -1 \\ 0
        \end{pmatrix} u(t), \qquad \bm{x}(0)=\begin{pmatrix} 1\\0 \end{pmatrix},
    \end{equation*}
    with $d=1$, $k=1000$, and $u(t) = 5\cos(3 t)$, $t\in [0,T]$, $T=1$. In particular, the errors at $t=T$ are shown in dependence on the step size $h$, i.e., $\|\bm x_{T,h}-x(T)\|_2$, $|\mathcal{H}(\bm x_{T,h})-\mathcal{H}(x(T))|$, $|\mathcal{D}_{T,h}-\mathcal{D}^\star|$, and  $|\mathcal{S}_{T,h}-\mathcal{S}^\star|$, where $\bm x_{T,h}$ refers to the approximated state at $t=T$ computed with step size $h$ and $\bm x(T)$ is the exact solution with the associated exact energies $\mathcal{D}^\star$, $\mathcal{S}^\star$. State and Hamiltonian converge with order $p=4$ for \texttt{PBS4} and $p=6$ for \texttt{PBS6} in accordance to the designed consistency orders, whereas the dissipated energy and the supplied energy exhibit only second order, indicating an energy consistency order $q = 2$.
\end{example}

\begin{figure}[t]
    \centering
     \includegraphics[]{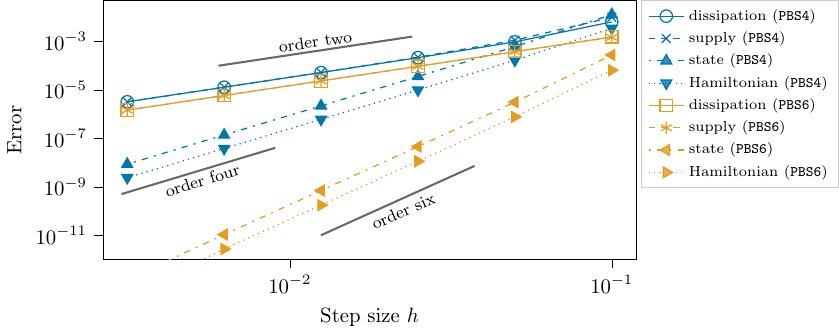}
        \caption{Approximation errors in the state, Hamiltonian, dissipated energy, and supplied energy of the port-based splitting schemes \texttt{PBS4} ($p=4$) and \texttt{PBS6} ($p=6$) for different step sizes $h$, compared with the exact solution of a damped and driven harmonic oscillator.}
    \label{fig:energy-consistency-pbs}
\end{figure}

\subsection{Subflow Approximation}

For numerical simulation it is convenient and often necessary to replace the exact subflows by numerical approximations. The resulting method $\bm{\Psi}^\mathrm{A}$ retains the consistency order $p$ from the underlying splitting scheme  $\bm{\Psi}$ provided that the subflow approximations are of the same or higher order. As discussed, under the stated assumptions, the orders of consistency and energy consistency of a numerical integrator satisfy $q\leq p$.

\begin{proposition}\label{prop: Energyconsistentsplit}
Let the hypotheses of Lemma \ref{lemma: energy-consistent splitting} and the regularity assumptions of Proposition~\ref{prop: SplittingOrder2} hold.
Let $\bm{\Psi}$ be a splitting method that is energy-consistent of order $q\leq2$. If each subflow $\bm{\varphi}^{[i]}$ is replaced by a numerical flow $\bm{\psi}^{[i]}$ that is energy-consistent to the same or a higher order, then the overall method $\bm{\Psi}^\mathrm{A}$ is energy-consistent of order $q^\mathrm{A}=q$.
\end{proposition}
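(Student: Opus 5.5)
The plan is to mimic the structure of the proof of Lemma~\ref{lemma: energy-consistent splitting}, telescoping the Hamiltonian over the $2s$ sub-steps, but now with numerical subflows. By hypothesis each $\bm{\psi}^{[i]}$ is energy-consistent of some order $q_i\ge q$ for the pH-subproblem $\splitpart{i}$. On the $j$-th sub-step from $\tau_{j-2}$ to $\tau_j$, with start value $\tilde{\bm x}_{(j-1)/2s}$, it therefore supplies a decomposition
\[
\mathcal{H}(\tilde{\bm x}_{j/2s})-\mathcal{H}(\tilde{\bm x}_{(j-1)/2s})=\mathcal{D}^{[j]}+\mathcal{S}^{[j]},
\]
where $\mathcal{D}^{[j]}\le 0$, $\mathcal{D}^{[j]}=0$ if $\bm R_{i(j)}\equiv\bm 0$, and $\mathcal{S}^{[j]}=0$ if $\bm B_{i(j)}\equiv \bm 0$. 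Here the sign conditions iv), v) guarantee $\tau_{j-2}\le\tau_j$ whenever $\bm R_{i(j)}\neq\bm 0$, so that the positive-step hypothesis of the definition applies. I set $\mathcal{D}_h^\mathrm{A}=\sum_j\mathcal{D}^{[j]}$ and $\mathcal{S}_h^\mathrm{A}=\sum_j\mathcal{S}^{[j]}$. Property (i) of Definition~\ref{def: energy-consistent} then follows exactly as in the lemma, using iii) and the positive semidefiniteness of $\bm R_i$.

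For the order statement, I compare each $\mathcal{S}^{[j]}$ with the exact-subflow quantity $\int_{\tau_{j-2}}^{\tau_j}\bm y_{i(j)}^\top\bm u\,\diff\tau$ along the exact subflow started at the \emph{same} numerical point $\tilde{\bm x}_{(j-1)/2s}$. By the subflow energy-consistency of order $q_i\ge q$, this discrepancy is $\mathcal{O}(h^{q+1})$; since $s$ is fixed, summing gives $\mathcal{O}(h^{q+1})$ in total. The sum of these exact-subflow integrals is not literally $\mathcal{S}_h$ of \eqref{eq:splitting-method_power-balance}, because the start points differ. However, it is the supplied energy of the exact splitting $\bm\Psi$ applied along a perturbed chain of start values. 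I then reuse the expansion from the proof of Proposition~\ref{prop: SplittingOrder2}. For $q\le 2$, only the first-order terms of the start values enter: the zeroth- and first-order terms in $h$ of each sub-step integrand involve $\tilde{\bm x}_{(j-1)/2s}$ only through $\bm x_0+\bm v_j+\mathcal{O}(h^2)$. I establish this form by a short induction over $j$, using that each $\bm\psi^{[i]}$ is at least first-order consistent; this follows from energy consistency together with the Lipschitz and regularity assumptions, or is simply imposed as in the paragraph preceding the proposition.

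Since each sub-step interval has length $\mathcal{O}(h)$, a start-value error of $\mathcal{O}(h^2)$ changes each integral by $\mathcal{O}(h^3)$. Hence the expansion of $\mathcal{S}_h^\mathrm{A}$ coincides with that of $\mathcal{S}_h$ up to $\mathcal{O}(h^{q+1})$, and Proposition~\ref{prop: SplittingOrder2} yields $\mathcal{S}_h^\mathrm{A}=\mathcal{S}^\star+\mathcal{O}(h^{q+1})$. For the dissipated energy I use the same trick as at the end of the proof of Proposition~\ref{prop: SplittingOrder2}. Since $\bm\Psi^\mathrm{A}$ has consistency order $\ge q$, we have $\mathcal{H}(\bm x_1^\mathrm{A})-\mathcal{H}(\bm x_0)=\mathcal{D}^\star+\mathcal{S}^\star+\mathcal{O}(h^{q+1})$. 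Subtracting gives $\mathcal{D}_h^\mathrm{A}=\mathcal{D}^\star+\mathcal{O}(h^{q+1})$, and the limits in (ii) follow from the case $q\ge1$. Finally, I get $q^\mathrm{A}\le q$, so that $q^\mathrm{A}=q$ exactly, because the leading error terms of $\bm\Psi$ reappear unchanged.

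The main obstacle is the middle step: controlling how start-value errors from earlier numerical sub-steps propagate into the energy terms of later ones. I handle it via the $h$-expansion truncated at $h^{q+1}$ with $q\le2$, which is exactly why the restriction $q\le 2$ appears. The order bookkeeping for $\mathcal{O}(h^2)$ state errors over $\mathcal{O}(h)$ intervals must be checked carefully, using the $\mathcal{C}^r$ regularity of $\bm z$, $\bm B_i$ and $\bm u$.
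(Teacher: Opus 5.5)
For the supplied energy you follow the paper's proof. The paper also compares each numerical sub-step contribution with the one along the exact subflow started at the same numerical point (its $\tilde{\bm x}_{\nicefrac{j}{2s}}$), and then controls the drift of the start values by a Lipschitz bound over sub-intervals of length $\mathcal{O}(h)$. Your bookkeeping is slightly sharper there. The paper proves $\|\hat{\bm x}_{\nicefrac{j}{2s}}-\bm x_{\nicefrac{j}{2s}}\|=\mathcal{O}(h^{q+1})$ from subflow consistency order $p^{[i]}\ge q$, which is one order more than needed once multiplied by $\tau_j-\tau_{j-2}$. You note instead that $\mathcal{O}(h^2)$ start-value errors, i.e.\ first-order consistency of the $\bm\psi^{[i]}$, suffice for $q\le 2$.

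The genuine difference is the dissipated energy. The paper treats it exactly like $\mathcal{S}$: energy consistency of $\bm\psi^{[i]}$ plus a Lipschitz bound on $(\bm z^\top\bm R_i\bm z)\circ\bm\varphi^{[i]}$, followed by the hypothesis $\mathcal{D}_h=\mathcal{D}^\star+\mathcal{O}(h^{q+1})$ on $\bm\Psi$. Your shortcut via $\mathcal{H}(\bm x_1^\mathrm{A})-\mathcal{H}(\bm x(t_0+h))$ needs \emph{state} consistency of order $q$ of $\bm\Psi^\mathrm{A}$, hence of $\bm\Psi$ itself and of every $\bm\psi^{[i]}$. That is not among the hypotheses: only \emph{energy} consistency of order $q$ is assumed for $\bm\Psi$, and the two notions differ (Lie--Trotter splitting of a closed conservative system is energy-consistent of every order). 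It also throws away the economy you gained for $\mathcal{S}$. There are two easy repairs:
\begin{itemize}
\item run your $\mathcal{S}$-argument verbatim for $\mathcal{D}$; or
\item compare with the exact splitting endpoint $\bm x_1=\bm\Psi_{t_1,t_0}(\bm x_0)$, using $\mathcal{D}_h^\mathrm{A}-\mathcal{D}_h=\mathcal{H}(\bm x_1^\mathrm{A})-\mathcal{H}(\bm x_1)-(\mathcal{S}_h^\mathrm{A}-\mathcal{S}_h)$. This only requires $\|\bm x_1^\mathrm{A}-\bm x_1\|=\mathcal{O}(h^{q+1})$, which is exactly the paper's induction.
\end{itemize}

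Three side claims need correcting.
\begin{itemize}
\item Consistency of $\bm\psi^{[i]}$ does not follow from energy consistency. For a subproblem with $\bm R_i=\bm 0$ and $\bm B_i=\bm 0$ (e.g.\ the conservative part of the energy-associated decomposition), the identity map is energy-consistent of every order. Inserting it into Strang splitting destroys second-order energy consistency of the dissipation. So consistency has to be imposed, as the paper tacitly does with $p^{[i]}\ge q$.
\item The relation $\mathcal{S}_h=\mathcal{S}^\star+\mathcal{O}(h^{q+1})$ should be taken from the hypothesis on $\bm\Psi$, not from Proposition~\ref{prop: SplittingOrder2}. That proposition only covers $p\le 2$, whereas the present statement is meant to include schemes such as \texttt{PBS4} with $p=4$, $q=2$.
\item Drop the claim $q^\mathrm{A}\le q$. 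In Definition~\ref{def: energy-consistent} the order is only a lower bound, so nothing prevents a higher energy order. Moreover, the $h^{q+1}$ error terms of $\bm\Psi$ need not reappear unchanged: subflow energy errors with $q_i=q$, and for $q=2$ your own $\mathcal{O}(h^2)$ start-value shifts, contribute at exactly that order. The conclusion $q^\mathrm{A}=q$ only asserts that $\bm\Psi^\mathrm{A}$ is energy-consistent of order $q$, which is all the paper proves.
\end{itemize}
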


\begin{proof}
In the splitting scheme $\bm{\Psi}$, dissipated energy $\mathcal{D}_h$ and supplied energy $\mathcal{S}_h$ are given by \eqref{eq:splitting-method_power-balance}.
We denote the energy contributions in the $j$-th sub-step, $j=1,...,2s$, by
$$\mathcal{D}^j  = \int_{\tau_{j-2}}^{\tau_j} \hspace*{-0.3cm} - \bm{z}( \bm{x}_{\nicefrac{j}{2s}}(\tau) )^\top \bm{R}_{i(j)}( \bm{x}_{\nicefrac{j}{2s}}(\tau)  ) \bm{z}( \bm{x}_{\nicefrac{j}{2s}}(\tau) )\leq 0, \qquad
 \mathcal{S}^j = \int_{\tau_{j-2}}^{\tau_j}   \bm{y}_{i(j)}(\tau)^\top \bm{u}(\tau) \, \mathrm{d}\tau, $$
they depend on the intermediate solution $\bm x_{\nicefrac{j}{2s}}(\tau)= \bm{\varphi}^{[i(j)]}_{\tau,\tau_{j-2}}(\bm x_{\nicefrac{(j-1)}{2s}}(\tau_{j-1}))$ with exact subflow $\bm \varphi^{[i]}$.
Since $\bm{\Psi}^\mathrm{A}$ is composed of energy-consistent numerical integrators (subflow approximations) $\bm \psi^{[i]}$, a discrete power balance is fulfilled in each sub-step. The corresponding energy contributions, denoted by $\hat {\mathcal{D}}^j\leq 0$ and $\hat{\mathcal{S}}^j$, depend on the associated intermediate (numerical) solution $\hat{\bm x}_{\nicefrac{j}{2s}}(\tau)= \bm{\psi}^{[i(j)]}_{\tau,\tau_{j-2}}(\hat{\bm x}_{\nicefrac{(j-1)}{2s}}(\tau_{j-1}))$. 
Note that
\begin{align}\label{eq:int-sol}
\|\hat{\bm x}_{\nicefrac{j}{2s}}-\bm x_{\nicefrac{j}{2s}}\|=\mathcal{O}(h^{q+1}) \text{ for all } j,
\end{align}
which can be concluded by induction. For $j=1$, due to the exact initialization with $\bm x_0$ at $t_0$, we find directly the consistency error of $\bm{\psi}^{[i]}$, whose order satisfies $p^{[i]} \geq q$. For $j>1$, $i=i(j)$, we have
\begin{align*}
\big\|\hat{\bm x}_{\nicefrac{j}{2s}}-\bm x_{\nicefrac{j}{2s}}\big\|
&\leq \big\|(\bm{\psi}^{[i]}_{.,\tau_{j-2}}-\bm{\varphi}^{[i]}_{.,\tau_{j-2}})\big|_{\hat{\bm x}_{\nicefrac{(j-1)}{2s}}(\tilde \tau)}\big\|+
 \big\|\bm{\varphi}^{[i]}_{.,\tau_{j-2}}({\hat{\bm x}_{\nicefrac{(j-1)}{2s}}(\tilde \tau)})-\bm{\varphi}^{[i]}_{.,\tau_{j-2}}({\bm x_{\nicefrac{(j-1)}{2s}}(\tilde\tau)})\big\|,
\end{align*}
$\tilde \tau=\tau_{j-1}$, hence the estimate \eqref{eq:int-sol} follows from the consistency of $\bm{\psi}^{[i]}$ and the Lipschitz continuity of $\bm{\varphi}^{[i]}$. 

Introducing $\tilde{\bm x}_{\nicefrac{j}{2s}}(\tau)= \bm{\varphi}^{[i(j)]}_{\tau,\tau_{j-2}}(\hat{\bm x}_{\nicefrac{(j-1)}{2s}}(\tau_{j-1}))$, the respective energies $\mathcal{D}^j_{\tilde{\bm x}}$, $\mathcal{S}^j_{\tilde{\bm x}}$ are the exact counterparts to $\hat{\mathcal{D}}^j$, $\hat{\mathcal{S}}^j$, i.e., $|\hat{\mathcal{D}}^j-\mathcal{D}^j_{\tilde{\bm x}}|=\mathcal{O}(h^{q+1})$ and $|\hat{\mathcal{S}}^j-\mathcal{S}^j_{\tilde{\bm x}}|=\mathcal{O}(h^{q+1})$ by the energy consistency of $\bm{\psi}^{[i]}$. Moreover
\begin{align*}
|\mathcal{D}^j_{\tilde{\bm x}} - \mathcal{D}^j|
&\leq \int_{\tau_{j-2}}^{\tau_{j}} \|(\bm z^\top \bm R_{i(j)} \bm z)\circ \bm{\varphi}^{[i(j)]}_{\cdot,\tau_{j-2}}({\hat{\bm x}_{\nicefrac{(j-1)}{2s}}(\tilde \tau)})- (\bm z^\top \bm R_{i(j)} \bm z)\circ \bm{\varphi}^{[i(j)]}_{\cdot,\tau_{j-2}}({{\bm x}_{\nicefrac{(j-1)}{2s}}(\tilde \tau)})\| \, \mathrm{d} \tau \\
& \leq (\tau_j-\tau_{j-2}) L^R_j \|\hat{\bm x}_{\nicefrac{(j-1)}{2s}}-\bm x_{\nicefrac{(j-1)}{2s}}\| =\mathcal{O}(h^{q+1})
\end{align*}
with Lipschitz constant $L^R_j$ of  $(\bm z^\top \bm R_{i(j)} \bm z)\circ \bm{\varphi}^{[i(j)]}$ and \eqref{eq:int-sol}. Analogously,
\begin{align*}
|\mathcal{S}^j_{\tilde{\bm x}} - \mathcal{S}^j|
& \leq |\tau_j-\tau_{j-2}| \|\bm u\|_\infty L^B_j \|\hat{\bm x}_{\nicefrac{(j-1)}{2s}}-\bm x_{\nicefrac{(j-1)}{2s}}\|  =\mathcal{O}(h^{q+1})
\end{align*}
with Lipschitz constant $L^B_j$ of  $(\bm z^\top \bm B_{i(j)})\circ \bm{\varphi}^{[i(j)]}$. Applying the triangle inequality we hence have for the energy distributions in the $j$-th sub-step
\begin{align*}
|\hat{\mathcal{D}}^j-\mathcal{D}^j|=\mathcal{O}(h^{q+1}), \qquad |\hat{\mathcal{S}}^j-\mathcal{S}^j|=\mathcal{O}(h^{q+1}) \text{ for all } j
\end{align*}
Summing up over all substeps yields the result.
 \end{proof}
 
Note that the result from Proposition~\ref{prop: Energyconsistentsplit} can be transferred straightforwardly to higher order ($q\geq 3$).
As for energy-consistent numerical integrators $\bm{\psi}^{[i]}$, Gauss collocation schemes can be used for quadratic Hamiltonians and discrete gradient methods for general Hamiltonians. Gauss collocation schemes satisfy $q=p$ and are available for arbitrary order $p=2s$ with $s$ stages \cite{hairer2006}. Discrete gradient schemes of consistency order $p\leq 2$ are also well established \cite{gonzalez1996, kinon2026}, in particular $q=p$ holds (cf.\ Appendix~\ref{app:DGM}). Higher-order energy-consistent discrete gradient schemes are limited to systems without dissipation $\bm R = \bm 0$ (analogously as for classical splitting), \cite{celledoni2017, eidnes2022}.

\section{Decomposition Strategies} \label{sec:decomposition}

The decomposition of a port-Hamiltonian system has to balance two, in general competing, objectives:  preserving the energetic properties of the continuous system and exploiting the structure of the underlying model to reduce the computational cost. In particular, for coupled systems, the interconnection of the subsystems induces a specific block form that offers potential for dimension reduction and parallelization.
Consider a pH-ODE $\system$ \eqref{eq:pH-ODE} obtained by an energy-conserving skew-symmetric coupling of two pH-ODE subsystems $\system_1$ and $\system_2$ of dimensions $n_1$ and $n_2$, respectively. With $\bm x=(\bm x_1^\top, \bm x_2^\top)^\top$ and $n=n_1+n_2$, it is given by \cite{ehrhardt2026}
\begin{equation} \label{eq:coupledSystem}
\begin{aligned}
\begin{pmatrix}
\bm{E}_1(\bm{x}_1) & \bm{0}\\
\bm{0} & \bm{E}_2(\bm{x}_2)
\end{pmatrix}\dot{\bm{x}}
&=
\bigl(\begin{pmatrix}
\bm{J}_1(\bm{x}_1) & \bm{C}(\bm{x})\\[0.3em]
-\bm{C}(\bm{x})^{\top} & \bm{J}_2(\bm{x}_2)
\end{pmatrix}
-
\begin{pmatrix}
\bm{R}_1(\bm{x}_1) & \bm{0}\\
\bm{0} & \bm{R}_2(\bm{x}_2)
\end{pmatrix}
\bigr)
\begin{pmatrix}
\bm{z}_1(\bm{x}_1)\\
\bm{z}_2(\bm{x}_2)
\end{pmatrix}\\
&\quad +
\begin{pmatrix}
\bm{B}_1(\bm{x}_1) & \bm{0}\\
\bm{0} & \bm{B}_2(\bm{x}_2)
\end{pmatrix}
\begin{pmatrix}
\bm{u}_1(t)\\
\bm{u}_2(t)
\end{pmatrix},\\
(\bm{y}_1^\top,\bm{y}_2^\top)^\top&=\bm B(\bm x)^\top \bm z (\bm x)
\end{aligned}
\end{equation}
with coupling matrix $\bm{C}(\bm{x})$. The Hamiltonian is additive, $\mathcal{H}(\bm x)= \mathcal{H}_1(\bm x_1)+ \mathcal{H}_2(\bm x_2)$.

In this section we present five decomposition strategies. We first describe general energy- and port-based decompositions and their specialization to a coupled system~\eqref{eq:coupledSystem}, followed by decompositions that explicitly exploit the coupling structure. Finally, we discuss their hierarchical combination to address the complexity of multiphysical problems.
In the following, we call a decomposition \emph{energy-consistent} if it yields an energy-consistent splitting scheme under the assumptions of Lemma~\ref{lemma: energy-consistent splitting}.

\subsection{Energy- and Port-based Decompositions}\label{sec:structure-preserving}

The energy-associated and port-based decompositions provide general mechanisms for splitting a port-Hamiltonian system while retaining the energetic properties of the underlying formulation; they are therefore energy-consistent. They apply to general port-Hamiltonian systems \eqref{eq:pH-ODE} and are thus not restricted to coupled systems \eqref{eq:coupledSystem}. For a coupled system, however, the block structure provides additional computational advantages.

\subsubsection*{\textbf{Energy-associated decomposition (conservative vs.\ passive)}}

The energy-associated decomposition separates the energy-conserving interconnection dynamics from dissipation and external forcing. Applied to $\system$, it gives the conservative subproblem $\splitpart{1}$ and the passive subproblem $\splitpart{2}$ \cite{frommer2026,moench2025},
\begin{subequations}\label{eq:JR-split}
\begin{align}
\label{eq:JR-split-conservative}
\splitpart{1} \colon \,\,
\bm{E}(\bm{x})\dot{\bm{x}} &= \bm{J}(\bm{x})\,\bm{z}(\bm{x}) && \text{(conservative),} \\
\label{eq:JR-split-dissipative}
\splitpart{2} \colon \,\,
\bm{E}(\bm{x})\dot{\bm{x}} &=
-\bm{R}(\bm{x})\,\bm{z}(\bm{x}) + \bm{B}(\bm{x})\bm{u}(t) && \text{(passive).}
\end{align}
\end{subequations}
Obviously, the decomposition satisfies the hypotheses of Lemma~\ref{lemma: energy-consistent splitting} and thus enables the construction of energy-consistent splitting methods.

For the coupled system~\eqref{eq:coupledSystem}, the block-diagonal structure of $\bm{E}$, $\bm{R}$, and $\bm B$ implies that $\splitpart{2}$ separates into two systems of dimensions $n_1$ and $n_2$, i.e.,
\[
\bm E_i(\bm x_i)\dot{\bm x}_i=-\bm R_i(\bm x_i)\bm z_i(\bm x_i)+\bm B_i(\bm x_i)\bm u_i(t), \qquad i=1,2,
\]
which can thus be solved independently and, if appropriate, in parallel. In contrast, $\splitpart{1}$ contains the complete interconnection matrix $\bm J(\bm x)$ with coupling matrix $\bm C(\bm x)$ and has hence full dimension $n$ even if the coupling is low-dimensional.

The decomposition separates distinct energetic roles and hence permits different numerical treatments of the two subproblems. While the passive subproblem can be treated by any dissipative integrator, the conservative subproblem should be approximated by an energy-conserving method \cite{hairer2006}. For quadratic Hamiltonians, Gauss collocation and symplectic Runge--Kutta methods conserve the Hamiltonian exactly; for general Hamiltonians, discrete gradient methods provide a corresponding energy-consistent approximation. 

Note that the energy-associated decomposition forms the basis for higher-order energy-con\-sistent splitting methods  ($p\geq3$) for linear pH-ODEs \cite{moench2025,schaefers2026}, and certain nonlinear subclasses \cite{moench2026}. Under certain assignments of the energy parts in the constraints, it can be also used for splitting of index-1 pH-DAEs \cite{bartel2025}.

\subsubsection*{\textbf{Port-based decomposition (internal vs.\ external)}}

The port-based decomposition separates the internal dynamics from the interaction with the external environment. It is given by \cite{auzinger2019, moench2025}
\begin{subequations}\label{eq:port-split}
\begin{align}
\label{eq:port-split-int}
\splitpart{1}
\colon \,\,
\bm{E}(\bm {x})\dot{\bm{x}} &= \left( \bm{J}(\bm{x}) - \bm{R}(\bm{x})\right)\,\bm{z}(\bm{x}) && \text{(internal),} 
\\
\label{eq:port-split-ext}
\splitpart{2}
\colon \,\,
\bm{E}(\bm {x})\dot{\bm{x}} &= \bm{B}(\bm{x})\,\bm{u}(t) && \text{(external).}
\end{align}
\end{subequations}
The two subproblems have a direct energetic interpretation: the first accounts for internal energy exchange and dissipation, and the second represents energy exchange through the external ports.
While the internal subproblem $\splitpart{1}$ is a closed, autonomous dissipative pH-ODE, the external subproblem $\splitpart{2}$ is lossless, satisfying $\tfrac{\mathrm d}{\mathrm dt}\mathcal{H}(\bm x(t))=\bm y(t)^\top \bm u(t)$.
The decomposition fulfills the hypotheses of Lemma~\ref{lemma: energy-consistent splitting}, allowing for energy-consistent splitting.

For the coupled system~\eqref{eq:coupledSystem}, the block-diagonal structure of $\bm{E}$ and $\bm{B}$ causes $\splitpart{2}$ to separate into 
\[
\bm E_i(\bm x_i)\dot{\bm x}_i=
\bm B_i(\bm x_i)\bm u_i(t), \quad i=1,2.
\]
Thus, the port contribution can be evaluated independently -- in parallel -- for the resulting subsystems of size $n_1$ and $n_2$. In contrast,  $\splitpart{1}$ contains the coupling through $\bm{C}(x)$ and generally remains a coupled system, as in the energy-associated decomposition.

For linear systems, the port-based decomposition can further be exploited in the construction of higher-order splitting methods. Problem-specific commutator relations, together with frozen-time techniques in the non-autonomous case \cite{blanes2006, blanes2010}, may cause higher-order commutators to vanish and thereby permit higher-order compositions with positive coefficients. The resulting methods remain compatible with the energy consistency framework, although the order of approximation of the supplied and dissipated energies need not coincide with the classical order of the splitting method (cf.\ Example~\ref{ex:order}).

\subsection{Decompositions Exploiting the Coupling Structure}\label{sec:coupling-decompositions}

For coupled pH-ODEs~\eqref{eq:coupledSystem}, the interconnection structure provides additional opportunities to enhance computational efficiency. The sub\-system-based decomposition follows directly from the physical subsystem partition and yields the most direct reduction of the effective problem dimensions. The diagonal decomposition instead separates the uncoupled subsystem dynamics from the coupling, enabling dimensional reduction and parallel computation, while preserving the port-Hamiltonian structure of both subproblems. The time-scale decomposition exploits separated time scales and allows fast and slow dynamics to be resolved on different time grids.

\subsubsection*{\textbf{Subsystem-based decomposition (component-wise)}}

Network-based modeling is a natural approach for large multiphysical systems, in which a complex model is assembled from smaller subsystems. From a computational perspective, it is therefore appealing to reverse this construction in the numerical treatment and solve the individual subsystems separately. This idea leads to the subsystem-based decomposition (cf.\ component-wise partitioning in \cite{arnold2001,busch2012,kuebler2000}). A row-wise decomposition of the system matrices yields two subproblems associated with the two underlying (physical) subsystems $\subsystem{1}$ and  $\subsystem{2}$,
\begin{subequations}\label{eq:dim-split}
\begin{align}
\label{eq:dim-split-1}
\splitpart{1} \colon \,\, \bm{E}(\bm{x})\dot{\bm{x}}  &=
\Biggl[
\underbrace{\begin{pmatrix}
        \bm{J}_1(\bm{x}_1) & \bm{C}(\bm{x})\\
        \bm{0} & \bm{0}
    \end{pmatrix}}_{=:\, \bm{J}^{[1]}(\bm{x})}
-
\begin{pmatrix}
        \bm{R}_1(\bm{x}_1) & \bm{0}\\
        \bm{0} & \bm{0}
    \end{pmatrix}
\Biggr] \bm{z}(\bm{x})
+
\begin{pmatrix}
        \bm{B}_{1}(\bm{x}_1)\bm{u}_1(t)\\
        \bm{0}
    \end{pmatrix} &&  
        (\subsystem{1}),
        \\
\label{eq:dim-split-2}
\splitpart{2} \colon \,\, \bm{E}(\bm{x})\dot{\bm{x}}  &=
\Biggl[
\underbrace{\begin{pmatrix}
        \bm{0} & \bm{0}\\
        -\bm{C}(\bm{x})^{\top} & \bm{J}_2(\bm{x}_2)
    \end{pmatrix}}_{=:\, \bm{J}^{[2]}(\bm{x})}
-
\begin{pmatrix}
        \bm{0} & \bm{0}\\
        \bm{0} & \bm{R}_2(\bm{x}_2)
    \end{pmatrix}
\Biggr] \bm{z}(\bm{x})
+
\begin{pmatrix}
        \bm{0}\\
        \bm{B}_{2}(\bm{x}_2)\bm{u}_2(t)
    \end{pmatrix} &&     
     (\subsystem{2}).
\end{align}
\end{subequations}
The characteristic feature of this decomposition is the inactive block row: $\bm E_2(\bm x_2)\dot{\bm x}_2=\bm 0$ in $\splitpart{1}$ and $\bm E_1(\bm x_1)\dot{\bm x}_1=\bm 0$ in $\splitpart{2}$. Although the subproblems are formally posed in the full state space of dimension $n$, their dynamics therefore evolve effectively in dimensions $n_1$ and $n_2$, respectively. This can substantially reduce the computational cost and allows the two subproblems to be treated independently.

The computational advantage, however, comes at a structural cost. Unless $\bm C(\bm x)=\bm0$, the matrices $\bm J^{[i]}(\bm x)$ are not skew-symmetric, and the effort/flow pairing is disrupted, \cite{polyuga2012, vanderschaft2014}. Note that $\bm C(\bm x)=\bm0$ corresponds to the absence of any coupling between $\subsystem{1}$ and $\subsystem{2}$ and is therefore not relevant to the coupled systems considered here. Consequently, the subproblems are, in general, not port-Hamiltonian systems with respect to the original gradient pair $(\bm E,\bm z)$. Their flows therefore need not satisfy the power balance of the original system.

This loss of structure has direct consequences for the numerical behavior of the resulting splitting method. Even in the absence of external input, the Hamiltonian may increase along an individual subflow, so that the dissipation inequality of the original system is not inherited by the splitting. Hence, the dimensional reduction achieved by the subsystem-based decomposition does not by itself imply energy consistency.

For linear systems, the implications can be made more explicit. The stability of an individual subsystem-based subproblem depends on the spectrum of its reduced system matrix and is not automatically inherited from the stability of the original coupled system. Even if the individual subproblems are stable, their composition, for example via Strang splitting, may require a step-size restriction, as the spectral radius of the resulting splitting matrix can exceed one for sufficiently large step sizes.

The subsystem-based decomposition trades energy consistency for direct dimensional reduction.
In particular, the loss of the port-Hamiltonian structure may impose step-size restrictions on the resulting splitting method.

\subsubsection*{\textbf{Diagonal decomposition (subsystem dynamics vs.\ coupling)}}

The diagonal decomposition follows the same subsystem-oriented perspective but separates the uncoupled subsystem dynamics from the coupling itself. It is given by, \cite{lorenz2025},
\begin{subequations}\label{eq:diag-split}
\begin{align}
\label{eq:diag-split-uncoupled} \nonumber
\splitpart{1}\colon \,\,
\bm{E}(\bm{x})\dot{\bm{x}} &=
\begin{pmatrix}
\bm{J}_1(\bm{x}_1)\!-\!\bm{R}_1(\bm{x}_1) & \bm{0}\\
\bm{0} & \bm{J}_2(\bm{x}_2)\!-\!\bm{R}_2(\bm{x}_2)
\end{pmatrix}\, \bm{z}(\bm{x})
\;+\; \begin{pmatrix}
    \bm{B}_1(\bm{x}_1) & \bm{0} \\ \bm{0} & \bm{B}_2(\bm{x}_2)
\end{pmatrix} \bm{u}(t), \\
&\hspace*{7cm}\text{(uncoupled subsystems),} 
\\
\label{eq:diag-split-coupling} 
\splitpart{2} \colon \,\,
\bm{E}(\bm{x})\dot{\bm{x}} &=
\begin{pmatrix}
\bm{0} & \bm{C}(\bm{x})\\
-\bm{C}(\bm{x})^{\top} & \bm{0}
\end{pmatrix}\, \bm{z}(\bm{x}), 
\hspace*{2.5cm}\text{(coupling).}
\end{align}
\end{subequations}
We refer to $\splitpart{1}$ as the uncoupled subproblem describing the uncoupled dynamics of the subsystems $\subsystem{1}$ and $\subsystem{2}$, and to $\splitpart{2}$ as the coupling subproblem representing the coupling effects. In contrast to the subsystem-based decomposition, both subproblems retain the port-Hamiltonian structure with the original gradient pair $(\bm E,\bm z)$. In particular, the block-diagonal matrices
\[
\bm J_{\mathrm d}(\bm x)=\operatorname{diag}\bigl(\bm J_1(\bm x_1),\bm J_2(\bm x_2)\bigr)
=-\bm J_{\mathrm d}(\bm x)^\top,
\qquad 
\bm R_{\mathrm d}(\bm x)=\operatorname{diag}\bigl(\bm R_1(\bm x_1),\bm R_2(\bm x_2)\bigr)
=\bm R_{\mathrm d}(\bm x)^\top \succeq \bm{0}
\]
are skew-symmetric and symmetric positive semidefinite.
Consequently, the uncoupled subproblem $\splitpart{1}$ is passive, whereas the coupling subproblem $\splitpart{2}$ is conservative: it transfers energy between the two subsystems without generating or dissipating energy.

The diagonal decomposition does not generally reduce the overall dimensions. However, $\splitpart{1}$ separates into two independent systems of dimensions $n_1$ and $n_2$
\begin{align*}
\bm E_i(\bm x_i)\dot{\bm x}_i = \bigl(\bm J_i(\bm x_i)-\bm R_i(\bm x_i)\bigr)\bm z_i(\bm x_i) +\bm B_i(\bm x_i)\bm u_i(t), \quad i=1,2,
\end{align*}
which can be solved independently and, in particular, in parallel. Moreover, if the coupling acts only through a low-dimensional subspace, $\splitpart{2}$ may itself admit an efficient reduced representation.
An additional advantage is that one subproblem is energy-conservative, which provides flexibility in the construction of higher-order energy-consistent splitting methods, including methods with negative sub-steps.

The diagonal decomposition can be viewed as a structure-preserving alternative to the subsystem-based decomposition: it retains the same component-level separation while isolating the coupling in a conservative subproblem. This thereby avoids the loss of the port-Hamiltonian structure associated with the subsystem-based decomposition and allows for energy-consistent splitting methods (for arbitrary step sizes $h>0$).

\begin{remark}
An important practical aspect is that both decomposition strategies, i.e., diagonal and subsystem-based decomposition, can be implemented using black-box simulators for the individual subsystems/components. This allows existing simulation codes to be incorporated without requiring access to their internal model representations. The key difference lies in how the coupling is realized.
\end{remark}

\subsubsection*{\textbf{Time-scale decomposition}}

A further source of computational savings arises when the coupled system exhibits strongly separated time scales. This situation is common in multiphysical applications; for example, electrical variables may evolve on a substantially faster time scale than thermal variables (see Section~\ref{sec:numerical_results}). Applying a single time step dictated by the fast dynamics to the entire system can then lead to unnecessary computational effort for the slow components.

Splitting methods provide a natural framework for multiple time stepping \cite{biesiadecki1993, grubmueller1991}. Consider a decomposition of the vector field
$ \bm f=\bm f^{[\mathfrak f]}+\bm f^{[\mathfrak s]}$ into fast $\bm f^{[\mathfrak f]}$ and slow dynamics $\bm f^{[\mathfrak s]}$. The impulse method, a multiple-time-stepping (multirate) variant of Strang splitting \cite{hairer2006}, applied to an autonomous system, takes the following form when the subflows are replaced by their numerical approximations $\bm \psi^{[i]}$,
\begin{equation}
\bm \Psi_h = \bm \psi^{[\mathfrak s]}_{h/2}\circ \left(\bm \psi^{[\mathfrak f]}_{h/m}\right)^m\circ\bm \psi^{[\mathfrak s]}_{h/2},
\label{eq:impulse-method}
\end{equation}
where $m\in\mathbb{N}$ is the \emph{multirate factor}. Thus, the fast subsystem is advanced with the micro-step $h/m$, whereas the slow subsystem is evaluated only on the macro time scale $h$. For non-autonomous systems, the initialization times are regarded in the subflows initializations
\begin{equation*}
\bm \Psi_{t_0+h,t_0} = \bm \psi^{[\mathfrak s]}_{t_0+h,t_0+h/2}\circ  \bm \psi^{[\mathfrak f]}_{t_0+h, t_0+(m-1)h/m} \circ \dots \circ \bm \psi^{[\mathfrak f]}_{t_0+h/m,t_0} \circ\bm \psi^{[\mathfrak s]}_{t_0+h/2,t_0}.
\end{equation*}

For the coupled pH-ODE \eqref{eq:coupledSystem}, we assume that $\bm x_1$ represents the fast variables and $\bm x_2$ the slow variables. The time-scale decomposition is then
\begin{subequations}\label{eq:mr-split}
\begin{align}
\label{eq:mr-fast-en}
\splitpart{1} \colon \,\,
\bm{E}(\bm{x})\dot{\bm{x}} &=
\underbrace{\begin{pmatrix}
        \bm{J}_1(\bm{x}_1)-\bm{R}_1(\bm{x}_1) & \bm{0}\\
        \bm{0} & \bm{0}
    \end{pmatrix}}_{=:\, \bm{J}^{[\mathfrak{f}]}(\bm{x})-\bm{R}^{[\mathfrak{f}]}(\bm{x})} \bm{z}(\bm{x})
+
\underbrace{\begin{pmatrix}
        \bm{B}_{1}(\bm{x}_1)\bm{u}_1(t)\\[0.2em]
        \bm{0}
    \end{pmatrix}}_{=:\, \bm{B}^{[\mathfrak{f}]}(t,\bm{x})\bm{u}_1(t)} && 
    \text{(fast),} \\
    \label{eq:mr-slow-en}
\splitpart{2} \colon \,\,
\bm{E}(\bm{x})\dot{\bm{x}} &=
\underbrace{\begin{pmatrix}
        \bm{0} & \bm{C}(\bm{x})\\
        -\bm{C}(\bm{x})^{\top} & \bm{J}_2(\bm{x}_2)-\bm{R}_2(\bm{x}_2)
    \end{pmatrix}}_{=:\, \bm{J}^{[\mathfrak{s}]}(\bm{x})-\bm{R}^{[\mathfrak{s}]}(\bm{x})} \bm{z}(\bm{x})
+
\underbrace{\begin{pmatrix}
        \bm{0}\\[0.2em]
        \bm{B}_{2}(\bm{x}_2)\bm{u}_2(t)
    \end{pmatrix}}_{=:\, \bm{B}^{[\mathfrak{s}]}(t,\bm{x})\bm{u}_2(t)} && 
    \text{(slow).} 
\end{align}
\end{subequations}
Both subproblems are passive pH-ODEs, in particular $\tfrac{\mathrm d}{\mathrm dt}\mathcal{H}(\bm{x}(t)) \leq \bm y_i(t)^\top \bm u_i(t)$, $i=1,2$. Consequently, the impulse method is energy-consistent.

In the fast subproblem, the slow component is frozen, $\bm E_2(\bm x_2)\dot{\bm x}_2=\bm 0$,
so that the effective dimension of $\splitpart{1}$ is $n_1$. The slow subproblem contains both the slow dynamics and the skew-symmetric coupling. Hence it generally remains of full dimension $n$, but a sparse coupling may reduce its effective size. In a best-case scenario, the coupling could be just scalar such that $\splitpart{2}$ has effectively almost the same size as without coupling, see, e.g., \cite{lorenz2025}.

The time-scale decomposition is particularly attractive when the fast subproblem is inexpensive to evaluate, e.g., because $n_1\ll n$ or because $\splitpart{1}$ is linear. In this case, the repeated evaluations within the impulse method incur only moderate additional cost while allowing the fast dynamics to be resolved more accurately. Conversely, if the fast subproblem is computationally expensive, the repeated micro-steps may offset the benefits of multiple time stepping.

The decomposition relies on the assumption that the coupling is compatible with the chosen slow--fast partition \cite{gear1984}. If the coupling itself contains fast dynamics, these contributions have to be included in the fast subproblem, which may reduce the computational advantage. A further decomposition might then be introduced to isolate the relevant coupling components and to apply an energy-consistent splitting method to the resulting three or more subproblems (hierarchical use, see Section~\ref{sec:hierarchical}).

\begin{table}[t]
\centering
\caption{Main characteristics of the decomposition approaches. The last column assumes a coupled pH-ODE with subsystems evolving on different characteristic time scales.}
\label{tab:decomp}
\begin{tabular}{lccccc}
\toprule
\textbf{Decomposition}
& \textbf{General}
& \textbf{Energy}
& \textbf{Dimensional}
& \textbf{Parallel}
& \textbf{Time-scale} \\
& \textbf{PHS}
& \textbf{consistent}
& \textbf{reduction}
& \textbf{potential}
& \textbf{separation}
\\
\midrule
Energy-associated
& Yes
& $\checkmark$
& Partial
& $\checkmark$
& No
\\
Port-based
& Yes
& $\checkmark$
& Partial
& $\checkmark$
& No
\\
Subsystem-based
& Coupled
& No
& $\checkmark$
& No
& $\checkmark$
\\
Diagonal
& Coupled
& $\checkmark$
& Partial
& $\checkmark$
& No
\\
Time-scale
& Coupled
& $\checkmark$
& Partial
& No
& $\checkmark$
\\
\bottomrule
\end{tabular}
\end{table}

\subsection{Hierarchical Use of Decomposition Approaches}\label{sec:hierarchical}

The presented decomposition appro\-aches (cf.\ Table~\ref{tab:decomp}) need not be applied exclusively or at a single level. For example, the energy- and port-based decompositions can be applied to port-Hamiltonian subproblems generated by the coupling-based decompositions (diagonal, subsystem-based, time-scale). This gives rise to a hierarchical strategy in which computational and energetic structures are exploited at different levels.

The hierarchical approach is particularly effective for the diagonal decomposition. It separates the independent subsystem/component dynamics from the conservative coupling while retaining the port-Hamiltonian structure. The resulting subsystem problems can be treated independently and in parallel, and can subsequently or recursively be decomposed according to their individual energetic structure using the energy-associated and/or port-based decompositions.
This also highlights a key distinction from the subsystem-based decomposition. The latter provides direct dimensional reduction, but its subproblems generally lose the port-Hamiltonian structure and may require step-size restrictions to maintain stability. The diagonal decomposition, in contrast, can provide comparable subsystem-level computational efficiency while ensuring energy consistency.

The hierarchical strategy can be summarized as
\begin{equation*}
\boxed{
\text{exploit coupling structure}
\quad\longrightarrow\quad
\text{preserve and exploit pH structure recursively}
}
\end{equation*}
This construction combines the computational advantages of coupling-based decompositions with the structural guarantees of energy-associated and port-based splitting, making it particularly attractive for large coupled multiphysical systems.

\section{Numerical Results}\label{sec:numerical_results}

We assess the decomposition strategies using an electro-thermal RLC network as benchmark. The numerical experiments are designed to investigate  the convergence, energy behavior, and computational cost of the single-rate splitting schemes based on the proposed decompositions, and the additional efficiency that can be gained by exploiting the inherent separation of time scales.

\subsection{Electro-Thermal Benchmark and Numerical Setup}\label{sec:benchmark}

We consider an electro-thermal RLC network with $N$ building blocks, as described in Appendix~\ref{app:thermal-electric-modeling}. The coupled system $\system$ consists of an electrical subsystem $\subsystem{1}$ for node potentials and inductor currents with $n_1=2N+1$ and a thermal subsystem $\subsystem{2}$ for the lumped entropies (temperatures) with $n_2=N$. The coupled pH-ODE \eqref{eq:benchmark} has hence dimension $n=3N+1$.
For the numerical simulations, we consider identical blocks using the following parameter values in SI units, 
\begin{equation*}\label{eq:parameters}
\begin{aligned}
    &C_0 = 10^{-3},\; \; C = 10^{-4},\; \; \; L = 10^{-2},\; \; \; R = 1500,\; \; \; R_0 = 2\cdot 10^{-1},\; \; \; \alpha_1 = 5\cdot 10^{-1},\; \; \; \alpha_2 = 10^{-3},\\
    &T_{\mathrm{env}} = T_{\mathrm{ref}} = 300,\; \; \;  M_i = 10^{-2}, \; \; \; \Gamma_i = 2\cdot 10^{-3},\; \; \; \Lambda_{i,i+1} = 2\cdot 10^{-3} \,\, \forall i,
\end{aligned}
\end{equation*}
as well as $\imath(t) = 3\sin(2\pi\cdot 10^3t)$. The initial value $\bm{x}_0 \in \R^{3\nbuild +1}$ is set to
\begin{align*}
    x_{0,i} =
\begin{cases}
1, & i = 1,\\
0.1, & i = 2k,\; k=1,\dots,\nbuild,\\
0, & \text{otherwise}.
\end{cases}
\end{align*}
Unless otherwise stated, we use $N=100$.

\begin{figure}[t]
    \begin{subfigure}{0.496\textwidth}
  \centering
    \includegraphics{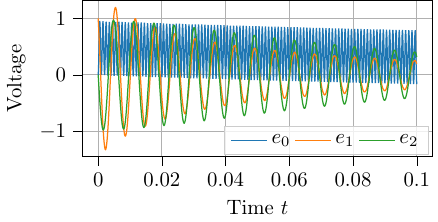}
  \caption{Voltage components.}
  \label{fig:voltage}
\end{subfigure}
\begin{subfigure}{0.496\textwidth}
  \centering
  \includegraphics{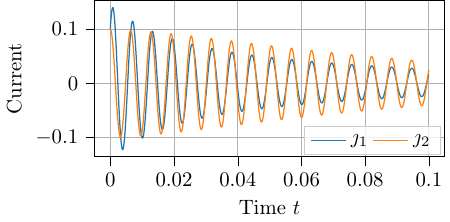}
  \caption{Current components.}
  \label{fig:current}
\end{subfigure}
\begin{subfigure}{0.496\textwidth}
  \centering
  \includegraphics[width=\textwidth]{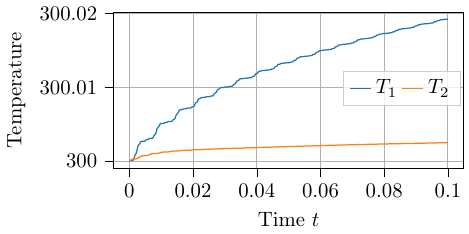}
  \caption{Temperature components.}
  \label{fig:temperature}
\end{subfigure}
    \caption{Electro-thermal network with $N=2$: voltages, currents, and temperatures for time $t\in [0,0.1]$ (reference computed with \texttt{solve\_ivp}).}
    \label{fig:Dynamics}
 \end{figure}

Figure~\ref{fig:Dynamics} illustrates the temporal evolution of the voltages (node potentials), currents, and temperatures for $N=2$. The voltage and current variables exhibit rapid variations and pronounced oscillations, whereas the temperature evolves on a substantially slower time scale. In particular, the node potential $e_0$ shows highly oscillatory behavior induced by the external input. The pronounced separation between the fast electrical and slow thermal dynamics makes the benchmark particularly suitable for the proposed decomposition strategies. In particular, it provides a natural setting in which the multirate potential of the coupled (multiphysical) system can be exploited.

For the numerical simulations presented in this section, the proposed decomposition strategies are embedded into the symmetric second-order Strang splitting \eqref{eq:Strang} and its multiple-time-stepping extension (impulse method) \eqref{eq:impulse-method}. The exact subflows are approximated by the second-order discrete gradient method \eqref{eq:DGM} with the Gonzalez discrete gradient \eqref{eq:Gonzalez_DG} (cf.\ Appendix~\ref{app:DGM}). Thus, the overall numerical integration schemes are of order $p=2$. The implementation is done in Python~3.14.0. The nonlinear systems arising from the implicit discrete gradient method are solved by Newton's method with an absolute tolerance of $10^{-8}$ and a maximum of 20 iterations. The respective Jacobians are approximated by two-point finite differences using \texttt{scipy.optimize.\_numdiff.approx\_derivative(\(\cdot\), method="2-point")}. Linear systems are solved by a direct solver.
The reference solutions are computed via \texttt{solve\_ivp} using a BDF method with absolute tolerance of $10^{-12}$ and relative tolerance of $10^{-12}$.

\subsection{Accuracy, Energy Behavior, and Computational Efficiency} \label{sec:single_rate}

We investigate the performance of the decomposition strategies, regarding accuracy, computational efficiency and energy behavior of the associated Strang splitting schemes.

For the numerical comparisons, we consider the integration schemes listed in Table~\ref{tab:integrator-strategies}. The labels indicate the decomposition and the ordering of the subproblems within the splitting. In particular, \texttt{JR} and \texttt{RJ} refer to the energy-associated decomposition, \texttt{PB1} and \texttt{PB2} to the port-based decomposition, \texttt{Dim1} and \texttt{Dim2} to the subsystem-based decomposition, \texttt{DO} and \texttt{OD} to the diagonal decomposition, and \texttt{TS} to the time-scale decomposition. The method \texttt{DG} is the Gonzalez discrete gradient method directly applied to the original pH-ODE~\eqref{eq:benchmark}. The main structural and computational characteristics of the decompositions are summarized in Table~\ref{tab:decomposition-comparison}.  Apart from the energy consistency, it states the linearity and effective dimension of the resulting subproblems. The latter refers to the number of active state variables and hence provides an indication of the computational complexity. The simulation results shown for $N=100$ are representative of the overall behavior. For $N=10$ and $N=1000$, the work-precision diagrams exhibit the same relative performance of the methods, while the absolute computational times increase with $N$.

\begin{table}[tb!]
    \centering
    \caption{Integration schemes: $\bm \Psi_h=\bm \psi_{h/2}^{[\mathfrak a]}\circ (\bm \psi_{h/m}^{[\mathfrak b]})^m \circ \bm \psi_{h/2}^{[\mathfrak a]}$ with $m=1$ in Strang-splitting (single-rate). The listed $m$ gives the multirate factor used in the multiple-time-stepping experiments (cf.\ Section~\ref{sec:multirate}).}
    \label{tab:integrator-strategies}
    \begin{tabular}{llllr}
        \toprule 
        \textbf{ID} & \textbf{Decomposition} & $\bm{\splitpart{\mathfrak{a}}}$ & $\bm{\splitpart{\mathfrak{b}}}$  & $\bm{m}$ \\
        \midrule 
         \texttt{JR} & Energy-associated & \phantom{1}\eqref{eq:JR-split-conservative} conservative & \phantom{1}\eqref{eq:JR-split-dissipative} passive & $5$ \\
         \texttt{RJ} & Energy-associated & \phantom{1}\eqref{eq:JR-split-dissipative} passive & \phantom{1}\eqref{eq:JR-split-conservative}  conservative & $1$ \\
        \texttt{PB1} & Port-based & \eqref{eq:port-split-int}  internal & \eqref{eq:port-split-ext} external & $1$\\
        \texttt{PB2} & Port-based & \eqref{eq:port-split-ext} external & \eqref{eq:port-split-int} internal & $1$\\
        \texttt{Dim1} & Subsystem-based & \eqref{eq:dim-split-1} $\subsystem{1}$ & \eqref{eq:dim-split-2} $\subsystem{2}$ & $1$ \\
         \texttt{Dim2} & Subsystem-based & \eqref{eq:dim-split-2} $\subsystem{2}$ & \eqref{eq:dim-split-1} $\subsystem{1}$ & $30$\\
        \texttt{DO} & Diagonal & \eqref{eq:diag-split-uncoupled} uncoupled & \eqref{eq:diag-split-coupling} coupling & $1$\\
        \texttt{OD} & Diagonal & \eqref{eq:diag-split-coupling} coupling & \eqref{eq:diag-split-uncoupled} uncoupled & $15$\\
         \texttt{TS} & Time-scale & \eqref{eq:mr-slow-en} slow & \eqref{eq:mr-fast-en} fast & $150$\\
          \texttt{DG} & \multicolumn{4}{l}{\emph{Gonzalez discrete gradient method without any decomposition}}   \\
         \bottomrule 
    \end{tabular}
\end{table}
\begin{table}[tb!]
\centering
\caption{Decomposition characteristics for electro-thermal network: information about subproblems regarding linearity ($\ell$ linear, n$\ell$ nonlinear), effective dimension, and energy consistency of associated splitting. An entry $\dots \& \dots$ indicates that $\splitpart{i}$ separates into two independent systems.}
\label{tab:decomposition-comparison}
\begin{tabular}{llllc}
\toprule
\makecell[c]{\textbf{Decomposition}\\\textbf{}}
&
\makecell[l]{\textbf{Subproblem}\\ $\splitpart{1}$, $\splitpart{2}$ }
&
\makecell[l]{\textbf{Linearity}\\\textbf{}}
&
\makecell[l]{\textbf{\,Effective}\\\textbf{dimension}}
&
\makecell[c]{\textbf{Energy}\\\textbf{consistent}}
\\
\midrule
\multirow{2}{*}{Energy-associated}
&conservative&n$\ell$&$3\nbuild+1$&\multirow{2}{*}{\makecell[c]{\checkmark}}\\
&passive&$\ell $ \&  n$\ell$&$(\nbuild+1)$ \&  $\nbuild$&\\
\midrule
\multirow{2}{*}{Port-based}
&internal&n$\ell$&$3\nbuild+1$&\multirow{2}{*}{\makecell[c]{\checkmark}}\\
&external&$\ell $ \& n$\ell$&1 \& $\nbuild$&\\
\midrule
\multirow{2}{*}{Subsystem-based}
&$\subsystem{1}$&n$\ell$&$2\nbuild+1$&\multirow{2}{*}{\makecell[c]{No}}\\
&$\subsystem{2}$&n$\ell$&$\nbuild$&\\
\midrule
\multirow{2}{*}{Diagonal}
&uncoupled&$\ell$ \&  n$\ell$&$(2\nbuild+1)$ \& $\nbuild$&\multirow{2}{*}{\makecell[c]{\checkmark}}\\
&coupling&n$\ell$&$2\nbuild+1$&\\
\midrule
\multirow{2}{*}{Time-scale}
&fast&$\ell$&$2\nbuild+1$&\multirow{2}{*}{\makecell[c]{\checkmark}}\\
&slow&n$\ell$&$2\nbuild+1$&
\\
\bottomrule
\end{tabular}
\end{table}

\begin{figure}[t]
    \centering
    \includegraphics[]{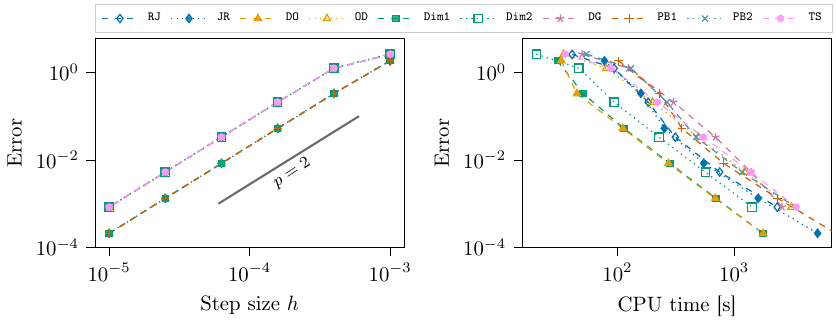}
    \caption{Decomposition strategies in Strang splitting applied to electro-thermal network, $N=100$. Left: discrete $L^2([0,0.1])$-error in time versus step size $h$. Right: error versus CPU time (in seconds).}
    \label{fig:decomposition_comparison}
\end{figure}

Figure~\ref{fig:decomposition_comparison} shows the discrete $L^2([0,0.1])$-error in time as a function of the step size $h$ and the corresponding work-precision diagram. All decomposition strategies exhibit the expected second-order convergence. The methods mainly differ in their error constants and computational costs. The convergence results reveal two groups of methods with different error levels (Figure~\ref{fig:decomposition_comparison}(left)); in particular \texttt{Dim1}, \texttt{DO}, \texttt{JR}, and \texttt{PB1} are of higher accuracy. In terms of computational efficiency, the subsystem-based variant \texttt{Dim1} and the diagonal variant \texttt{DO} perform best, they are particularly competitive. This behavior is consistent with the reduced effective dimensions and the possibility of solving independent subsystem problems separately or in parallel (cf.\ Table~\ref{tab:decomposition-comparison}). For comparison, the unsplit Gonzalez discrete gradient method \texttt{DG} is also included. Most of the splitting approaches provide a more favorable work-precision trade-off than the direct application of the discrete gradient method, demonstrating the computational benefit of exploiting the decomposition structure for the complex multiphysical benchmark. 

\begin{figure}[b]
    \centering
    \includegraphics[]{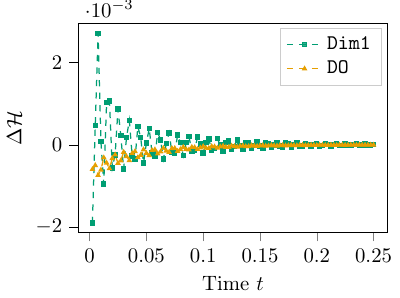}\hfill\includegraphics[]{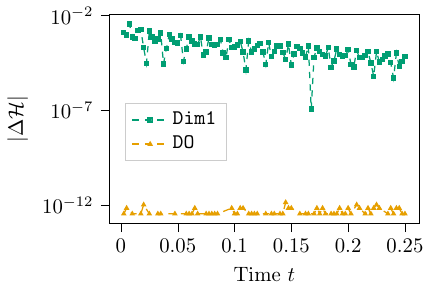}
    \caption{Energy behavior $\Delta \mathcal{H}_{k+1} \coloneqq (\mathcal{H}(\bm{x}_{k+1}) - \mathcal{H}(\bm{x}_k))/h$, $h=2.5 \cdot 10^{-3}$, of \texttt{DIM1} and \texttt{DO} for the network, $N=2$, without external input. The tolerance of the Newton iteration is $\mathtt{TOL} = 10^{-12}$.
  Left: energy-dissipative system, $R=1500$ (default). Right: numerically energy-conserving system, $R = 10^{14}$.}
    \label{fig:energy_behavior}
\end{figure}

The comparison between the coupling-based decompositions is of particular interest. 
As for the energy consistency, Figure~\ref{fig:energy_behavior} compares the discrete energy behaviors of the subsystem-based decomposition \texttt{Dim1} and the diagonal decomposition \texttt{DO}. To study the energy behavior, all external inputs are set to zero ($\imath \equiv 0$, $\Gamma_i = 0$ for all $i$). The resulting system $\system$ is dissipative and satisfies $\tfrac{\mathrm d}{\mathrm dt} \mathcal{H}(\bm x(t))\leq 0$, so that the exact Hamiltonian is non-increasing, $\mathcal{H}(\bm{\varphi}_{t,0}(\bm{x}_0)) \leq \mathcal{H}(\bm{x}_0)$ for all $t>0$.
The diagonal decomposition \texttt{DO} preserves the qualitative dissipative behavior: the discrete energy difference remains non-positive for the iterates $\bm x_k$, $\mathcal{H}(\bm x_{k+1})-\mathcal{H}(\bm x_k)\leq0$. In contrast, the subsystem-based decomposition \texttt{Dim1} does not satisfy the dissipation inequality at the discrete level, with the corresponding values oscillating around zero, see Figure~\ref{fig:energy_behavior}(left).
To further assess energy conservation, we additionally reduce the dissipation to a negligible level by choosing the resistance parameter sufficiently large, $R=10^{14}$. The resulting system is numerically energy-conservative. In this setting, \texttt{DO} yields an energy-conserving numerical method, with energy errors on the order of the tolerance used in the Newton iteration (Figure~\ref{fig:energy_behavior}(right)). This follows from the fact that both subflows are energy-conservative and approximated by the energy-conserving discrete gradient method. In contrast, \texttt{Dim1} produces subproblems that are not themselves port-Hamiltonian and hence do not preserve the energy at the subproblem level. The resulting splitting method consequently fails to conserve the Hamiltonian, as illustrated in Figure~\ref{fig:energy_behavior}(right).

These results demonstrate the practical relevance of the structural distinction between the subsystem-based and diagonal decompositions. 
The subsystem-based decomposition provides a direct dimensional reduction, but the resulting subproblems generally lose the port-Hamiltonian structure. Consequently, the corresponding splitting schemes do not possess the general energy-consistency guarantee and step-size restrictions are necessary. In contrast, the diagonal decomposition separates the uncoupled subsystem dynamics from the conservative coupling while preserving the port-Hamiltonian structure of both subproblems. Thus, the diagonal decomposition combines the computational advantages of subsystem-level separation with the structural properties required for energy-consistent integration (Lemma~\ref{lemma: energy-consistent splitting}). The work-precision results indicate that this can be achieved without a significant loss in computational efficiency compared with the subsystem-based decomposition.

\begin{figure}[t]
    \centering
     \includegraphics[]{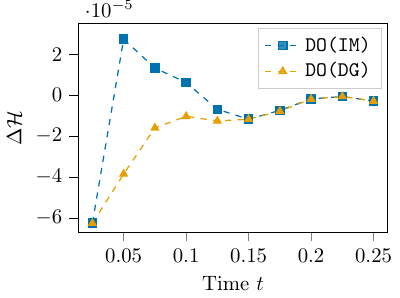}
       \caption{Energy behavior $\Delta \mathcal{H}_{k+1} \coloneqq (\mathcal{H}(\bm{x}_{k+1}) - \mathcal{H}(\bm{x}_k))/h$, $h= 2.5 \cdot 10^{-2}$, of \texttt{DO} with subflow approximation via Gonzalez discrete gradient method \texttt{DO(DG)} and implicit midpoint rule  \texttt{DO(IM)} for the network, $N=2$, without external input, $R = 10^{4}$, $M_i = 3\cdot 10^{-6}$, and initial value $\bm{x}_0 = (0.1,-0.5,0.1,0.5,-0.5)^\top$. The tolerance of the Newton iteration is $\mathtt{TOL} = 10^{-12}$.}
    \label{fig:energy_discrete_gradient}
\end{figure}

The role of energy-consistent subflow approximations in preserving energy consistency of the overall approximation has been analyzed qualitatively in Proposition~\ref{prop: Energyconsistentsplit}. A quantitative confirmation is provided in Figure~\ref{fig:energy_discrete_gradient}, where the subflows of the diagonal decomposition $\texttt{DO}$ in a dissipative setting are approximated using the Gonzalez discrete gradient method \texttt{DO(DG)} and the implicit midpoint rule \texttt{DO(IM)}. Although the implicit midpoint rule is a second-order A-stable scheme, it causes positive discrete energy differences at several time points, as it is not energy-consistent for the nonlinear Hamiltonian considered in the benchmark.

\subsection{Exploiting Time-Scale Separation}\label{sec:multirate}

The electro-thermal benchmark exhibits a pronounced separation of time scales, as demonstrated in Figure~\ref{fig:Dynamics}. We therefore investigate whether this structure can be exploited computationally by multiple time stepping.

We consider the impulse method based on the time-scale decomposition. The fast subproblem $\splitpart{1}$ (electric dynamics) is integrated with the micro-step $h/m$, whereas the slow subproblem $\splitpart{2}$ (thermal dynamics and coupling) is advanced with the macro-step $h$. Figure~\ref{fig:multirate_factor} shows the resulting convergence behavior and work-precision diagrams for the multirate factors $m\in\{1,10,50,150,200,300\}$. For all values of $m$, the expected second-order convergence is retained. Increasing $m$ improves the computational efficiency because the fast dynamics can be resolved on a finer time scale without requiring additional evaluations of the slow subproblem. For the present benchmark, a multirate factor of approximately $m=150$ provides the most favorable work-precision performance. For larger values of $m$, the additional cost associated with the repeated fast subproblem evaluations increasingly offsets the savings obtained from reducing the number of slow subproblem evaluations. The optimal multirate factor is problem-dependent and reflects the relative time scales and computational costs of the individual subproblems.

\begin{figure}[t]
    \centering
    \includegraphics{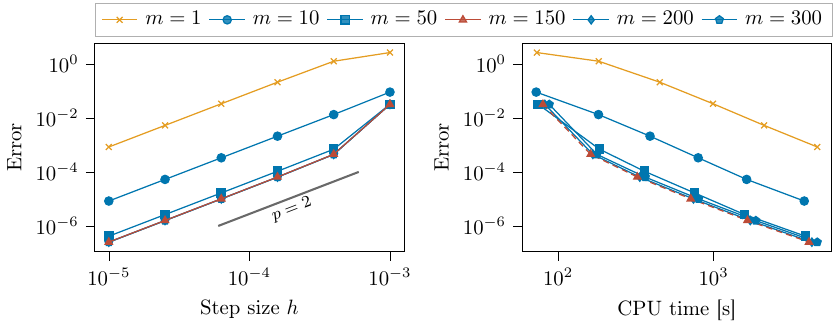}
    \caption{Impulse method based on the time-scale decomposition for different multirate factors $m\in\{1,10,50,150,200,300\}$. Left: discrete $L^2([0,0.1])$-error in time versus step size $h$. Right: error versus CPU time.}
    \label{fig:multirate_factor}
\end{figure}

We apply the same multiple-time-stepping principle to the remaining decomposition strategies. For each integrator, the multirate factor is selected according to its best work-precision performance. The resulting values are reported in Table~\ref{tab:integrator-strategies}.
Figure~\ref{fig:multirate_comparison} compares the resulting multirate variants. All methods retain their second-order convergence. In particular, the variants \texttt{Dim2}, \texttt{OD} and \texttt{TS} benefit substantially from multiple time stepping compared with their corresponding single-rate versions in Figure~\ref{fig:decomposition_comparison}(right). The multirate variant \texttt{TS} provides the most favorable computational performance for the present benchmark, which is not surprising, as the time-scale decomposition is specifically designed to exploit differences in the characteristic time scales of the coupled subsystems.
It requires only 22.1\% and 9.8\% of the computational cost of \texttt{OD} and \texttt{Dim2}, respectively, to achieve an error of $\mathcal{O}(10^{-4})$ in the discrete $L^2$-norm.
Importantly, the use of multiple time stepping does not compromise the structural properties of the underlying decomposition. 
Energy consistency is inherited by the corresponding multiple time stepping (multirate splitting) scheme under the assumptions of Lemma~\ref{lemma: energy-consistent splitting}.

\begin{figure}[t]
    \centering
    \includegraphics[]{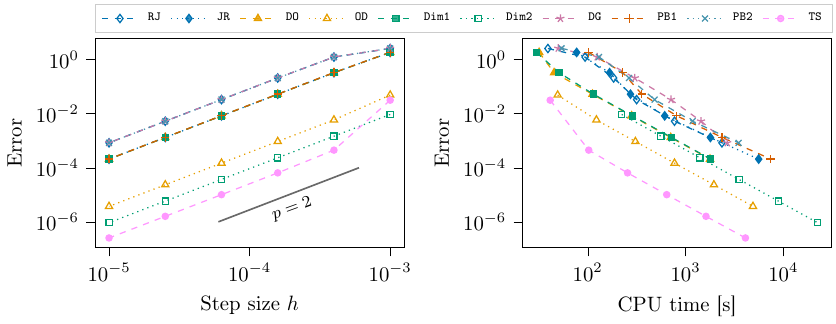}
    \caption{Decomposition strategies in impulse method with the multirate factors listed in Table~\ref{tab:integrator-strategies}. Left: discrete $L^2([0,0.1])$-error in time versus step size $h$. Right: error versus CPU time.}
    \label{fig:multirate_comparison}
\end{figure}

The numerical results demonstrate that in the present benchmark the separation of electrical and thermal time scales can be effectively exploited to reduce the computational effort. The time-scale multirate approach provides a substantial efficiency gain without sacrificing convergence order or energy consistency.

\section{Conclusion}\label{sec:conclusion}

In this work, we studied splitting methods for (coupled) port-Hamiltonian ODEs and analyzed how the choice of decomposition affects both structural properties and computational efficiency. 

Three main conclusions emerge. First, the choice of decomposition is crucial for preserving the energetic properties of the system. While the well-established subsystem-based decomposition (component-wise partitioning) can provide computationally efficient lower-dimensional subproblems, it generally destroys the port-Hamiltonian structure at the subflow level and therefore does not guarantee energy consistency. Controlling the resulting numerical energy behavior may consequently require step-size restrictions.
Second, structural preservation and computational efficiency need not constitute competing objectives. For coupled systems, the proposed diagonal decomposition exploits the coupling structure, enables dimension reduction and parallelization, while retaining the port-Hamiltonian structure of the subproblems and thereby ensuring energy-consistent splitting schemes. It thus combines competitive computational performance with the desired energetic properties and is particularly attractive for large-scale coupled systems.
Third, well-separated time scales can be effectively exploited by combining the time-scale decomposition with multiple time stepping. For the electro-thermal benchmark considered here, this yields a substantial additional efficiency gain while retaining the convergence and energetic properties of the underlying splitting method. The achievable gain is problem-dependent and reflects the separation of time scales and the relative computational costs of the subproblems.

Overall, an appropriate decomposition can exploit both, computational structure and energetic structure, potentially in a hierarchical manner. This suggests a natural hierarchical strategy in which coupling and time-scale structures are exploited first and the resulting subproblems are further decomposed using energy-associated and/or port-based approaches. Investigating such hierarchical splitting strategies for coupled pH-ODEs, together with their numerical stability properties, constitutes a natural direction for future work.

\subsection*{Acknowledgements} This work was partially funded by the Deutsche Forschungsgemeinschaft (DFG, German Research Foundation), Project-ID 531152215, CRC 1701 Port-Hamiltonian Systems.

\appendix
\numberwithin{equation}{section}
\numberwithin{figure}{section}
\numberwithin{table}{section}
\numberwithin{definition}{section}
\numberwithin{example}{section}
\numberwithin{remark}{section}
\numberwithin{lemma}{section}

\section{Discrete Gradient Methods} \label{app:DGM}

Discrete gradient methods constitute a class of energy-consistent numerical integration schemes for port-Hamiltonian systems \cite{kinon2026}.

\begin{definition}
\label{def:general-dg}
Let $\mathcal{H}:\R^n\rightarrow \R$ be sufficiently smooth. The function $\bar\nabla\mathcal H \colon \R^n\times\R^n\to\R^n$ is said to be a \emph{discrete gradient} if it satisfies
\begin{enumerate}
    \item[i)] $\bar\nabla\mathcal H(\bm{x}^\prime,\bm{x})^\top (\bm{x}^\prime - \bm{x}) = \mathcal H(\bm{x}^\prime)-\mathcal H(\bm{x})$ for all $\bm{x},\bm{x}^\prime \in\R^n$,
    \item[ii)] $\bar\nabla\mathcal H(\bm{x},\bm{x})=\nabla\mathcal H(\bm{x})$.
\end{enumerate} 
We call it a \emph{second-order discrete gradient} if
$$\bar\nabla\mathcal H(\bm{x}^\prime,\bm{x})=\nabla\mathcal{H}(\tfrac{\bm{x}^\prime+\bm{x}}{2})+\mathcal{O}(\|\bm{x}^\prime-\bm{x}\|^2).$$
\end{definition}

\begin{example}
    A prominent second-order discrete gradient is the Gonzalez discrete gradient \cite{gonzalez1996}
        \begin{align}\label{eq:Gonzalez_DG}
        \bar \nabla \mathcal{H}(\bm{x}^{\prime},\bm{x}) = \begin{cases} 
            \nabla \mathcal{H}(\bar{\bm{x}}) + \frac{\mathcal{H}(\bm{x}^{\prime}) - \mathcal{H}(\bm{x}) - \nabla \mathcal{H}(\bar{\bm{x}})^{\top} (\bm{x}^\prime - \bm{x})}{\lVert  (\bm{x}^\prime - \bm{x})\rVert_2^2} (\bm{x}^\prime - \bm{x}), & \bm{x}^{\prime} \neq \bm{x}, \\
            \nabla \mathcal{H}(\bm{x}), & \bm{x}^{\prime} = \bm{x},
    \end{cases}
    \end{align}
    where $\bar{\bm x} = (\bm x^{\prime}+\bm x)/2$ denotes the midpoint and $\lVert \cdot \rVert_2$ the Euclidean norm in $\R^n$. 
Further examples include the mean-value discrete gradient \cite{harten1983} and the symmetrized Itoh--Abe discrete gradient \cite{itoh1988,eidnes2022}.
\end{example}

For pH-ODEs~\eqref{eq:pH-ODE}, we focus on second-order discrete gradient methods of the form
\begin{equation}\label{eq:DGM}
\begin{aligned}
    \bm E(\bar{\bm x})(\bm{x}_{1} - \bm{x}_0) &= h\big[(\bm J(\bar{\bm x})-\bm R(\bar{\bm x}))\bar{\bm z}(\bm{x}_{1},\bm{x}_{0})+\bm B(\bar{\bm x})\bm u(\bar t)\big], \\ \bar{\bm z}(\bm{x}_{1},\bm{x}_{0}) &= \bm{E}(\bar{\bm x})^{-\top}\bar\nabla\mathcal H(\bm{x}_{1},\bm{x}_{0}),
\end{aligned}
\end{equation}
where $\bar{\nabla}\mathcal{H}(\bm{x}_{1},\bm{x}_0)$ is a second-order discrete gradient, $\bar{\bm{x}} = (\bm{x}_{1} + \bm{x}_0)/2$, and $\bar{t}=t_0+h/2$.

\begin{lemma}\label{lemma:DGM}
 Let $\system$ be a pH-ODE \eqref{eq:pH-ODE} with Hamiltonian $\mathcal{H}\in \mathcal{C}^3(\mathbb{R}^n,\mathbb{R})$, flow matrix function $\bm E \in \mathcal{C}^2(\mathbb{R}^n,\mathbb{R}^{n\times n})$, port function $\bm B \in \mathcal{C}^2(\mathbb{R}^n,\mathbb{R}^{n\times m})$, input $\bm u \in \mathcal{C}^2([t_0,T],\mathbb{R}^m)$ and $\bm{f}\in\mathcal{C}^1([t_0,T]\times \mathbb{R}^n,\mathbb{R}^n)$.
 Let $\bm x_1=\bm \Psi_{t_1,t_0}(\bm x_0)$  be the numerical approximation of a second-order discrete gradient method \eqref{eq:DGM} at $t_1=t_0+h$, $h>0$. Then,
\begin{align*}
\mathcal H(\bm x_{1})-\mathcal H(\bm x_0) \leq h\,\bar{\bm z}(\bm{x}_{1},\bm{x}_0)^\top \bm{B}(\bar{\bm x})\, \bm{u}(\bar{t}), 
\end{align*}
with $\bar{\bm z}(\bm{x}_{1},\bm{x}_0) = \bm{E}(\bar{\bm x})^{-\top} \bar\nabla \mathcal H(\bm x_{1},\bm x_0)$, $\bar{\bm x} = (\bm x_{1}+\bm x_0)/2$, and $\bar{t} = t_0 + h/2$. The scheme $\bm\Psi$ is energy-consistent of order $q=2$.
\end{lemma}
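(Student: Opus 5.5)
Two parts: first the discrete power balance, then the second-order accuracy of the discrete dissipated and supplied energies.

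\emph{Discrete power balance.} Write $\bar\nabla\mathcal H=\bar\nabla\mathcal H(\bm x_1,\bm x_0)$. By property i) of Definition~\ref{def:general-dg},
$$\mathcal H(\bm x_1)-\mathcal H(\bm x_0)=\bar\nabla\mathcal H^\top(\bm x_1-\bm x_0)=\bar{\bm z}^\top\bm E(\bar{\bm x})(\bm x_1-\bm x_0),$$
since $\bar\nabla\mathcal H=\bm E(\bar{\bm x})^\top\bar{\bm z}$. Inserting the scheme \eqref{eq:DGM} gives $h\,\bar{\bm z}^\top(\bm J-\bm R)(\bar{\bm x})\bar{\bm z}+h\,\bar{\bm z}^\top\bm B(\bar{\bm x})\bm u(\bar t)$. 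Skew-symmetry of $\bm J$ removes the $\bm J$ term. This yields the decomposition
$$\mathcal D_h=-h\,\bar{\bm z}^\top\bm R(\bar{\bm x})\bar{\bm z}\le0,\qquad \mathcal S_h=h\,\bar{\bm z}^\top\bm B(\bar{\bm x})\bm u(\bar t),$$
which is exactly the stated inequality. Condition (i) of Definition~\ref{def: energy-consistent} is immediate: $\mathcal D_h=0$ if $\bm R\equiv\bm 0$, and $\mathcal S_h=0$ if $\bm B\equiv\bm 0$. Solvability of the implicit scheme for small $h$ I take as given; it follows from the implicit function theorem since $\bm f\in\mathcal C^1$.

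\emph{Order of energy consistency.} The scheme is of consistency order $2$: it is a symmetric perturbation of the implicit midpoint rule, since $\bar\nabla\mathcal H=\nabla\mathcal H(\bar{\bm x})+\mathcal O(h^2)$. Hence $\bm x_1=\bm x(t_0+h)+\mathcal O(h^3)$ and $\bar{\bm x}=\bm x(\bar t)+\mathcal O(h^2)$, the latter because the average of $\bm x(t_0)$ and $\bm x(t_0+h)$ differs from $\bm x(\bar t)$ by $\mathcal O(h^2)$.

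Next I use the second-order discrete gradient property. Since $\|\bm x_1-\bm x_0\|=\mathcal O(h)$, it gives $\bar\nabla\mathcal H=\nabla\mathcal H(\bar{\bm x})+\mathcal O(h^2)$. Therefore $\bar{\bm z}=\bm z(\bar{\bm x})+\mathcal O(h^2)=\bm z(\bm x(\bar t))+\mathcal O(h^2)$, using the regularity $\mathcal H\in\mathcal C^3$ and $\bm E\in\mathcal C^2$.

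Consequently $\mathcal S_h=h\,\bm y(\bar t)^\top\bm u(\bar t)+\mathcal O(h^3)$, where $\bm y(\bar t)$ is evaluated along the exact solution and $\bm B\in\mathcal C^2$ is Lipschitz. By the midpoint quadrature rule, this equals $\int_{t_0}^{t_0+h}\bm y^\top\bm u\,\mathrm d\tau+\mathcal O(h^3)$. This step needs the integrand $\tau\mapsto\bm y(\tau)^\top\bm u(\tau)$ to be $\mathcal C^2$, which follows from the assumed regularity of $\bm z$, $\bm B$, $\bm u$ and of the exact solution.

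For $\mathcal D_h$ there are two routes. The direct one is the same midpoint argument, but it requires $\bm R$ to be Lipschitz, which is not explicitly assumed. The alternative mirrors the proof of Proposition~\ref{prop: SplittingOrder2}:
$$\mathcal D_h=\mathcal H(\bm x_1)-\mathcal H(\bm x_0)-\mathcal S_h.$$
The first difference equals $\mathcal H(\bm x(t_0+h))-\mathcal H(\bm x_0)+\mathcal O(h^3)=\mathcal D^\star+\mathcal S^\star+\mathcal O(h^3)$, and subtracting $\mathcal S_h=\mathcal S^\star+\mathcal O(h^3)$ gives $\mathcal D_h=\mathcal D^\star+\mathcal O(h^3)$. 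This establishes order $q=2$; the limits in condition (ii) of Definition~\ref{def: energy-consistent} follow by dividing by $h$.

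\emph{Main obstacle.} The delicate part is justifying the local error estimate $\bm x_1-\bm x(t_0+h)=\mathcal O(h^3)$ for the implicit scheme with state-dependent $\bm E$ and a merely second-order discrete gradient. I would argue it by comparing with the implicit midpoint rule: the defect is $h\,\mathcal O(h^2)$, and the Lipschitz dependence of the implicit solution on perturbations, again via the implicit function theorem, then bounds the difference by $\mathcal O(h^3)$.
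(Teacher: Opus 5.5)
Your proposal is correct, and its core coincides with the paper's proof. Both derive the discrete power balance from property i) of the discrete gradient together with skew-symmetry of $\bm J$. Both estimate $\mathcal S_h$ via $\bar{\bm z}=\bm z(\bm x(\bar t))+\mathcal O(h^2)$ followed by the midpoint quadrature rule.

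The only deviation is in $\mathcal D_h$. The paper treats it exactly like $\mathcal S_h$: it expands $\bar{\bm z}^\top\bm R(\bar{\bm x})\bar{\bm z}=(\bm z^\top\bm R\bm z)(\bm x(\bar t))+\mathcal O(h^2)$ and applies the midpoint rule. This tacitly requires $\tau\mapsto(\bm z^\top\bm R\bm z)(\bm x(\tau))$ to be $\mathcal C^2$, which is more than the Lipschitz continuity of $\bm R$ you mention. You instead import the subtraction argument $\mathcal D_h=(\mathcal H(\bm x_1)-\mathcal H(\bm x_0))-\mathcal S_h$ from the proof of Proposition~\ref{prop: SplittingOrder2}.

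The trade-off runs as follows. Your route frees $\mathcal D_h$ from any smoothness of $\bm R$, but it needs the full local error bound $\bm x_1-\bm x(t_0+h)=\mathcal O(h^3)$. The paper's direct route only uses $\bar{\bm x}=\bm x(\bar t)+\mathcal O(h^2)$, which already follows from a first-order local error bound. So the obstacle you single out is precisely the one the paper's route sidesteps.

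Also note that your implicit-midpoint comparison gives $\mathcal O(h^3)$ only if $\bm E^{-1}\bm f$ has a Lipschitz derivative; under the literal hypothesis $\bm f\in\mathcal C^1$ it yields just $o(h^2)$. Each route therefore rests on one smoothness assumption not listed in the lemma: on $\bm R$ for the paper's, and on $\bm f$ (or on taking second-order consistency of the scheme as given) for yours.
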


\begin{proof}
The definition of the discrete gradient and \eqref{eq:DGM} directly yield the discrete power balance
\begin{align*}
\mathcal{H}(\bm{x}_{1})-\mathcal{H}(\bm{x}_0) = -h\,\bar{\bm{z}}(\bm{x}_{1},\bm{x}_0)^\top \bm{R}(\bar{\bm x})\bar{\bm z}(\bm{x}_{1},\bm{x}_0) + h\,\bar{\bm{z}}(\bm{x}_{1},\bm{x}_0)^\top \bm{B}(\bar{\bm x})\bm{u}(\bar{t}),
\end{align*}
where we identify the dissipated and supplied energy distributions as
\begin{equation*}
    \mathcal{D}_h= -h\,\bar{\bm z}(\bm{x}_{1},\bm{x}_0)^\top \bm{R}(\bar{\bm x})\bar{\bm z}(\bm{x}_{1},\bm{x}_0) \leq 0,\qquad 
    \mathcal{S}_h = h\,\bar{\bm{z}}(\bm{x}_{1},\bm{x}_0)^\top \bm{B}(\bar{\bm x})\bm{u}(\bar{t}).
\end{equation*}
Consistency of the method gives
$\bar{\bm x} \to\bm x_0$ and
$\bar{\bm z}(\bm{x}_{1},\bm{x}_0)\to\bm z(\bm x_0)$ as $h\to0$. Consequently,
\begin{equation*}
    \lim_{h\to0}\frac{\mathcal{D}_h}{h} = -\bm z(\bm x_0)^\top \bm R(\bm x_0)\bm z(\bm x_0), \qquad \lim_{h\to0}\frac{\mathcal{S}_h}{h} = \bm y(t_0)^\top\bm u(t_0).
\end{equation*}
Let $\bm x(t)=\bm \varphi_{t,t_0}(\bm x_0)$ denote the exact solution of \eqref{eq:pH-ODE}. 
The order of consistency carries over to the order of energy consistency under the stated regularity assumptions. Since the discrete gradient is second-order, i.e., $\bar\nabla \mathcal H(\bm x_{1},\bm x_0) = \nabla\mathcal H(\bar{\bm x})+\mathcal O(h^2)$, it holds
\begin{align*}
\bar{\bm z}(\bm{x}_{1},\bm{x}_0) = \bm z(\bar{\bm x})+\mathcal O(h^2), \quad \text{ where } \bar{\bm x} = \bm x(\bar t)+\mathcal O(h^2).
\end{align*}
With the regularity of the system functions we find
\begin{equation*}
   \bar{\bm{z}}(\bm{x}_1,\bm x_0)^\top \bm R(\bar{\bm x})\bar{\bm z}(\bm x_1,\bm x_0) =\bm z(\bm x(\bar t))^\top \bm R(\bm x(\bar t))\bm z(\bm x(\bar t)) + \mathcal{O}(h^2), \quad  \bm{B}(\bar{\bm x})^\top \bar{\bm{z}}(\bm{x}_{1},\bm{x}_0)= \bm y(\bar t)+\mathcal O(h^2),
\end{equation*}
therefore,
\begin{equation*}
    \mathcal D_h = -h\,\bm z(\bm x(\bar t))^\top \bm R(\bm x(\bar t))\bm z(\bm x(\bar t))+ \mathcal{O}(h^3), \quad \mathcal{S}_h = h\,\bm y(\bar t)^\top\bm u(\bar t) +\mathcal O(h^3).
\end{equation*}
On the other hand, the midpoint quadrature rule yields
\begin{align*}
    \int_{t_0}^{t_0+h} - \bm z(\bm x(\tau))^\top \bm R(\bm x(\tau))\bm z(\bm x(\tau))\,\diff \tau \,&{=}\, -h\,\bm z(\bm x(\bar t))^\top \bm R(\bm x(\bar t))\bm z(\bm x(\bar t))+ \mathcal{O}(h^3),\\ 
     \int_{t_0}^{t_{0}+h} \bm y(\tau)^\top\bm u(\tau)\,\diff \tau \,&{=}\, h\,\bm y(\bar t)^\top\bm u(\bar t) +\mathcal O(h^3).
\end{align*}
\end{proof}
In the numerical simulations, we embed the discrete gradient method \eqref{eq:DGM} equipped with the Gonzalez discrete gradient \eqref{eq:Gonzalez_DG} into the Strang splitting scheme \eqref{eq:Strang} and its multiple-time-stepping extension \eqref{eq:impulse-method}, yielding energy-consistent second-order splitting approaches for pH-ODEs (cf.\ Proposition~\ref{prop: Energyconsistentsplit}). Note that for quadratic Hamiltonians, $\mathcal{H}(\bm{x}) = \tfrac{1}{2} \bm{x}^\top \bm{Q} \bm{x}$, $\bm Q=\bm Q^\top \succ 0$, the Gonzalez discrete gradient reduces to $\bar{\nabla} \mathcal{H}(\bm{x}^\prime,\bm{x}) = \bm{Q}\frac{\bm{x}^\prime + \bm{x}}{2}$ and the symmetric second-order discrete gradient method \eqref{eq:DGM} becomes the implicit midpoint rule (one-stage Gauss collocation scheme). The resulting linear systems can be solved efficiently and in a structure-preserving manner using iterative Krylov subspace methods, such as Q-Arnoldi-type approaches, \cite{maier2025}.

\section{Electro-thermal Modeling} \label{app:thermal-electric-modeling}

We set up a coupled electro-thermal system within the port-Hamiltonian framework. 
Electrical and thermal subsystems are formulated as individual PHS and then interconnected through energy-conserving internal ports. This yields a coupled port-Hamiltonian model that explicitly accounts for temperature-dependent electrical parameters, Joule heating, heat conduction, and heat exchange with the environment. Naturally, the model provides a power balance as well as preserves passivity, and serves with its rich structure as the test basis for the decomposition strategies and splitting approaches.

\subsection{Coupled Electro-Thermal Port-Hamiltonian Model}
\subsubsection*{\textbf{Electric network model}}

We consider an electrical RLC network consisting of resistors $G$, capacitors $C$, inductors $L$, and independent voltage and current sources $V$ and $I$. 
Their interconnections are described by element-specific incidence matrices $\bm{A}_X \in \{-1,0,1\}^{\nnodes\times b_X}$, $X\in \{\resist , L, C, V, I\}$, where $\nnodes\in\mathbb{N}$ denotes the number of circuit nodes (including ground) and $b_X\in\mathbb{N}$ the number of branches of type $X$.
Using modified nodal analysis (MNA), the network is represented by the pH-DAE, see, e.g., \cite{bartel2024,bartel2018}, 
\begin{align}\label{eq:MNA-model}
 \begin{pmatrix}
  \bm{A}_C \capM \bm{A}_C^{\top} &  \\
                 & \bm{L}\\
                 & & \bm{0}
  \end{pmatrix} \dot{\bm{x}}_C
  = 
  &
   	\begin{pmatrix}
   		\bm{A}_{\resist} \resistM \bm{A}_{\resist}^{\top}    & -\bm{A}_L & -\bm{A}_V \\ 
		\bm{A}_L ^\top&   \bm{0} \\
		\bm{A}_V^\top &      & \bm{0}
	\end{pmatrix} 
    \bm{x}_C 
    + \begin{pmatrix} -\bm{A}_I & \bm{0} \\
   	                              \bm{0}    & \bm{0} \\
   	                              \bm{0}    & -\bm{I}  \end{pmatrix} 
                                  \underbrace{
                                  \begin{pmatrix} \bm{\imath}(t) \\
                                    \bm{v}(t) 
                                  \end{pmatrix}}_{={\bm{u}}_C^{\ex}(t)},                              
\end{align}
where $\capM,\bm{L},\resistM$ are the positive definite parameter matrices of the capacitors, inductors and resistors, respectively.
The state is
\[
 \bm{x}_C = (\bm{e}^{\top}\!\!,\; \bm{\jmath}_L^{\top}\!,\; \bm{\jmath}_V^{\top} )^{\top} 
\]
 with node potentials $\bm{e}(t)\in\mathbbm{R}^{\nnodes}$, inductor currents $\bm{\jmath}_L(t)\in \mathbbm{R}^{b_L}$, and voltage-source currents  $\bm{\jmath}_V(t)\in \mathbbm{R}^{b_V}$. The external input consists of the current and voltage sources $\bm u_C^{\ex}(t)=(\bm{\imath}^\top, \bm{v}^\top)^\top(t) \in \mathbb{R}^{b_I+b_V}$.
 
To account for electro-thermal effects, we distinguish between thermally relevant $\resistT$ and thermally irrelevant resistors $\resistN$,
\begin{equation*}
  \bm{A}_{\resist} = (\bm{A}_{\resistT},\, \bm{A}_{\resistN} ), \qquad \resistM = \operatorname{blkdiag}(\resistTM,\resistNM).
\end{equation*}
For the thermally relevant resistors we assume two-terminal elements and associate a single lumped temperature $T_i$ with each resistor. Their conductance matrix is therefore diagonal,
\[
\resistTM=\resistTM(T) = \text{diag}\Bigl(\tfrac{1}{R_1 (T_1)},\dotsc, \tfrac{1}{R_{\nT} (T_{\nT})}\Bigr),
\]
where $R_i(T_i)$ denotes the temperature-dependent electric resistance. The temperature dependence can be modeled, e.g., quadratically,  $R (T)= R_0 + \alpha_1 T + \alpha_2 T^2$, with suitable coefficients $\alpha_1,\alpha_2, R_0 \in \mathbbm{R}$.
The thermally relevant resistors convert electrical power $\bm{p}$ into heat. Joule's law gives
\begin{equation}\label{eq:el-dissipated-power}
 \bm{p} = (p_1,\,\dotsc,\, p_{\nT})^\top= \resistTM(T)\,\operatorname{diag} \bigl(\bm{A}_{\resistT}^{\top} \bm{e}\bigr)  \bm{A}_{\resistT}^{\top} \bm{e}
   = \left(\tfrac{(\bm{A}_{\resistTi{1}}^{\top} \bm{e})^2}{R_1(T_1)},\; \dotsc,\; 
      \tfrac{(\bm{A}_{\resistTi{\nT}}^{\top} \bm{e})^2}{R_{\nT}(T_{\nT})} \right)^{\top}
\end{equation}
where we use the column-wise representation $\bm{A}_{\resistT}=\bigl(\bm{A}_{\resistTi{1}},\dotsc, \bm{A}_{\resistTi{\nT}}\bigr)$.

\subsubsection*{\textbf{Heat evolution model in circuit}} 

For each thermally relevant resistor, let $T_i$ denote the lumped temperature and $M_i$ its heat capacity (heat mass). Heat exchange between thermal elements $i$ and $j$ is described by the conductivity $\Lambda_{i,j}$, with $\Lambda_{i,j}=0$ for unconnected elements. According to Newton cooling each element exchanges heat with an ambient reservoir at temperature $T_{\env}$ with surface coefficient $\Gamma_i$. Heat generation within each element arises from electrical dissipation and is represented by the power input $p_i$ \eqref{eq:el-dissipated-power}. This yields a spatially distributed but lumped thermal model for the temperatures $\bm{z}_T=(T_1,\dotsc, T_{\nT})^{\top}$ that captures both local heat generation and diffusive heat transport. 
Considering surface matrix $\bm{\Gamma}$, heat mass matrix $\bm{M}$, and the symmetric heat-exchange matrix 
$ \bar{\bm{\Lambda}}$ 
$$
\bm{\Gamma}=\operatorname{diag}\bigl(\Gamma_1,\dotsc,\Gamma_{\nT}\bigr), \quad
\bm{M}=\operatorname{diag} \bigl(M_1,\dotsc,M_{\nT} \bigr), \quad
 \bar{\bm{\Lambda}} = \sum_{i < j} 
                \Lambda_{i,j} (-\canonic_i \canonic_i^{\top} + \canonic_i \canonic_j^{\top} + \canonic_j \canonic_i^{\top} - \canonic_j \canonic_j^{\top})
$$
with canonical unit vectors $\canonic_i$, the thermal balance reads, see~\cite{bartel2003},
\begin{equation}\label{eq:class-heat-equation}
  \bm{M} \dot{\bm{z}}_T
  = \bar{\bm{\Lambda}}  \bm{z}_T - \bm{\Gamma} (\bm{z}_T - T_{\env}\one) + \bm{p}
\end{equation}
with $\one=(1,\dotsc, 1)^{\!\top} \!\in \mathbbm{R}^{\nT}$.
To cast the ODE model \eqref{eq:class-heat-equation} into port-Hamiltonian form, we change the variables as in \cite{esterhuizen2024} and introduce entropy $S_i$ which is related to temperature $T_i$ according to
\begin{align*}
    T_i(S_i)= T_{\myref} \exp \left(\tfrac{S_i}{M_i} \right).
\end{align*}
With $\bm{x}_T= (S_1,\dotsc, S_{\nT})^{\top} $, \eqref{eq:class-heat-equation} becomes
\begin{equation}\label{eq:evolution-of-entropy}
     \dot{\bm{x}}_T= 
                \bm{\Lambda}(\bm{x}_T)\bm{z}_T
                - \operatorname{diag} \left(
                                \tfrac{T_1-T_{\env}}{T_1},\, \dotsc,\, \tfrac{T_{\nT}-T_{\env}}{T_{\nT}} 
                              \right){\bm{u}}_T^{\ex} 
                + \operatorname{diag}\left( \tfrac{1}{T_1},\, \dotsc,\, \tfrac{1}{T_{\nT}} \right) \hat{\bm{u}}_T 
\end{equation}
where
$$ \bm{\Lambda} = \sum_{i < j } \tfrac{\Lambda_{ij}\,(T_i-T_j)}{T_i \, T_j} \left(\canonic_j \canonic_i^{\top} - \canonic_i \canonic_j^{\top}\right),
 \qquad \hat{\bm{u}}_T=\bm{p}, \qquad
  {\bm{u}}_T^{\ex} = (\Gamma_1,\dotsc,\Gamma_{\nT})^{\top}. 
$$
The matrix $\bm{\Lambda}$ is skew-symmetric. Hence, heat conduction is represented by the interconnection structure rather than by a dissipative port-Hamiltonian term, while heat exchange with the ambient environment appears as an external port.

\subsubsection*{\textbf{Coupling}}

For the electric subsystem, the temperature-dependent resistors are represented by an internal port $\bm{B_C}(\bm x_C) \hat{\bm{u}}_C$. Then \eqref{eq:MNA-model} can be written as
\begin{align}\label{eq:network-pHS}
\begin{split}
 \begin{pmatrix}
  \bm{A}_C \capM \bm{A}_C^{\top} &  \\
                 & \bm{L}\\
                 & & \bm{0}
  \end{pmatrix} \dot{\bm{x}}_C
  = 
  &
    \left( 
   	\begin{pmatrix}
   		\bm{0}   & -\bm{A}_L & -\bm{A}_V \\ 
		\bm{A}_L^\top &   \bm{0} \\
		\bm{A}_V^\top &      & \bm{0}
	\end{pmatrix} 
    -
   	\begin{pmatrix} \bm{A}_{\resistN} \resistNM \bm{A}_{\resistN}^{\top} & \\ 
   		                           & \bm{0} \\
   		                           &   & \bm{0}
   	\end{pmatrix} 
   	\right) \bm{x}_C 
   	\\
   	& \;+ \begin{pmatrix} -\bm{A}_I & \bm{0} \\
   	                              \bm{0}    & \bm{0} \\
   	                              \bm{0}    & -\bm{I}  \end{pmatrix} {\bm{u}}_C^{\ex}
    + \begin{pmatrix}  -\operatorname{diag} \bigl(\bm{A}_{\resistT} \bm{A}_{\resistT}^{\top} \bm{e}\bigr) 
    \\
    \bm{0} 
    \\
    \bm{0}
      \end{pmatrix} \hat{\bm{u}}_C,
\end{split}
\end{align}
with inputs and outputs
\begin{align*}
&\hat{\bm{u}}_C 
= \left( \tfrac{1}{R_1 (T_1)},\; \dotsc,\; \tfrac{1}{R_{\nT} (T_{\nT})}  \right)^{\!\!\top}\!, 
                    &&\hat{\bm{y}}_C = \bm{B}_C(\bm x_C)^{\top} \bm{z}_C 
                 = -\left( (\bm{A}_{\resistTi{1}}^{\top} \bm{e})^2,\, \dotsc,\,  (\bm{A}_{\resistTi{\nT}}^{\top} \bm{e})^2\right)^\top,\\
&\bm{u}_C^{\ex} = \begin{pmatrix} \bm{\imath} \\ \bm{v}  \end{pmatrix}\!, 
                   &&\bm{y}_C^{\ex} = \Bigl(\bm{B}_C^{\ex}\Bigr)^{\!\!\top} \bm{x}_C = \begin{pmatrix} -\bm{A}_I^{\top} \bm{e} \\ -\bm{A}_V \bm{\jmath}_V \end{pmatrix}\!.                  
\end{align*}
The thermal subsystem  \eqref{eq:evolution-of-entropy} has the inputs and outputs
\begin{align*}
& \hat{\bm{u}}_T=\bm p=\left(\tfrac{(\bm{A}_{\resistTi{1}}^{\top} \bm{e})^2}{R_1(T_1)},\; \dotsc,\; 
      \tfrac{(\bm{A}_{\resistTi{\nT}}^{\top} \bm{e})^2}{R_{\nT}(T_{\nT})} \right)^{\top}, 
    &&\hat{\bm{y}}_T =\bm{B}_T(\bm x_T)^\top \bm{z}_T = \one \in \mathbbm{R}^{\nT}, \\
& {\bm{u}}_T^{\ex} = (\Gamma_1,\dotsc,\Gamma_{\nT})^{\top}, 
    &&\bm{y}_T^{\ex} =(\bm{B}_T^{\ex}(\bm x_T))^\top  \bm{z}_T = - (T_1\!-\!T_{\env},\; \dotsc,\; T_{\nT}\!-\!T_{\env})^{\!\top}.
\end{align*}
The two subsystems are interconnected through the skew-symmetric relation \cite{ehrhardt2026}
\begin{align}\label{eq:pH-coupling-general-electro-thermal}
	\begin{pmatrix}
		\hat{\bm{u}}_C \\ \hat{\bm{u}}_T
	\end{pmatrix}
	= 
	 \begin{pmatrix} \bm{0} &  \hat{\bm{C}}\\
		 -\hat{\bm{C}}^{\top} & \bm{0}\end{pmatrix}
	\begin{pmatrix}
		\hat{\bm{y}}_C \\ \hat{\bm{y}}_T
	\end{pmatrix}
    \quad 
    \text{with}
    \quad 
    \hat{\bm{C}} \coloneqq \resistTM = \text{diag}\left( \tfrac{1}{R_1 (T_1)},\; \dotsc,\; \tfrac{1}{R_{\nT} (T_{\nT})} 
                   \right).
\end{align}
Thus, the electrical power dissipated by the thermally relevant resistors enters the thermal subsystem exactly as the corresponding heat input, while the temperature dependence of the electrical resistance is retained in the electrical subsystem.
The Hamiltonians of the electric and thermal subsystems are
\[
 \mathcal{H}_C(\bm{x}_C) = \frac{1}{2} \bm{x}_C^{\top} \begin{pmatrix}
       \bm{A}_C \bm{C}_C \bm{A}_C^{\top} &  \\
                  & \bm{L}\\
                  & & \bm{0}
 \end{pmatrix} \bm{x}_C , 
 \qquad
   \mathcal{H}_T(\bm{x}_T) 
                = \sum_{i=1}^{\nT} M_i \,T_i(S_i).
\]
Hence, the total stored energy is the sum of the electrical and thermal contributions.

\subsection{Scalable Benchmark Problem} \label{subsec: Scalable ODE}

As a benchmark for the numerical simulations in Section~\ref{sec:numerical_results}, we consider the scalable electro-thermal system shown in Fig.~\ref{fig:ODE-thermal-electric-scalable}. The basic building block is the electro-thermal network enclosed in the dashed box. Each block comprises a temperature-dependent resistor \(R_i (T_i)\) and a parallel $RLC$-link to ground. The $i$th-block is described by node potential $e_i$, the inductor current $\jmath_{i}$, and the lumped entropy $S_i$ with temperature $T_i(S_i)$. The network is driven by a current source \(\imath(t)\) at node $e_0$.
Connecting \(\nbuild\) identical blocks in a chain gives a scalable model with $n_C=2N+1$ electrical and $n_T=N$ thermal state variables.
The state $\bm x=(\bm x_C^\top, \bm x_T^\top)^\top$ and effort variables $\bm z(\bm x)=(\bm z_C^\top, \bm z_T^\top)^\top$ 
\[
   \bm{x}_C= \bm{z}_C = (e_0,e_1,\jmath_1,e_2,\jmath_2,\dotsc, e_{\nbuild},\jmath_{\nbuild})^{\top} , \qquad
   \bm{x}_T = (S_1,\dotsc, S_{\nbuild})^\top, \quad  \bm{z}_T = (T_1,\dotsc, T_{\nbuild})^\top,
\]   
satisfy the coupled pH-ODE, $\bm{x}(t_0)=\bm{x}_0$,
\begin{align} \label{eq:benchmark}
\begin{pmatrix}
        \bm{E}_C & \bm{0}\\
        \bm{0}   & \bm{I}
    \end{pmatrix} \dot{\bm x}
    &=(\begin{pmatrix}
        \bm{J}_C & {\bm{C}}(\bm{x})\\[0.3em]
        - {\bm{C}}(\bm{x})^{\top} & \bm{J}_T(\bm{x}_T)
        \end{pmatrix}
        -\begin{pmatrix}
        \bm{R}_C\! & \bm{0}\\
        \bm{0}        & \bm{0}
    \end{pmatrix}) \, \bm z(\bm x) + 
    \begin{pmatrix}
        \bm{B}_C^{\ex} & \bm{0}\\
        \bm{0} & \bm{B}_T^{\ex}(\bm{x}_T)
    \end{pmatrix} \bm u(t),\\
    \bm{y} &= \bm{B}(\bm{x})^{\top} \bm{z}(\bm{x}), \nonumber
\end{align}
with coupling matrix ${\bm C}(\bm x)=-{\bm B}_C(\bm x_C) \hat{\bm C}(\bm x) {\bm B}_T(\bm x_T)^\top $ as induced by \eqref{eq:evolution-of-entropy}, \eqref{eq:network-pHS}, and \eqref{eq:pH-coupling-general-electro-thermal}.
In particular, the subsystem matrices for the inner dynamics are
\begin{align*}
&\bm J_C=\begin{pmatrix}
		0 &   &   &  \\
		  & 0 & -1 &   \\
		  & 1 &  0 &  \\
		  &   &    &  \ddots \\
		  &   &    &         & 0 & -1  \\
		  &   &    &         & 1 &  0  \\
	\end{pmatrix}, 
	\quad
	\bm R_C = \operatorname{diag}(0,\tfrac{1}{R},0,\dots,\tfrac{1}{R},0),
	\quad
	\bm E_C = \operatorname{diag}(C_0,C,L,\dots,C,L),\\
	&\bm J_T(\bm x_T)= \begin{pmatrix} 0 & \hspace*{0ex}-\frac{\Lambda_{1,2}(T_{1}-T_{2})}{T_{1} T_2} \hspace*{4ex}&  \\[1ex]
		\frac{\Lambda_{1,2}(T_{1}-T_{2})}{T_{1} T_{2}} & \hspace*{-2.5ex} 0 \hspace*{0.5ex} & \hspace*{4ex}-\frac{\Lambda_{2,3}(T_{2}-T_{3})}{T_{2} T_{3}} \\[2ex]
		& \hspace*{-2ex}\ddots &\hspace*{-6ex}\ddots & \hspace*{-5ex}\ddots \\[3ex]
		&        & \hspace*{-10ex}\frac{\Lambda_{\nbuild-2,\nbuild-1}(T_{\nbuild-2}-T_{\nbuild-1})}{T_{\nbuild-2} T_{\nbuild-1}}\hspace*{-1ex}  &  \hspace*{-2ex}0     &  -\frac{\Lambda_{\nbuild-1,\nbuild}(T_{\nbuild-1}-T_{\nbuild})}{T_{\nbuild-1} T_{\nbuild}} \\[1ex]
		&        &   & \hspace*{-1ex}\frac{\Lambda_{\nbuild-1,\nbuild}(T_{\nbuild-1}-T_{\nbuild})}{T_{\nbuild-1} T_{\nbuild}}\hspace*{-1ex} & 0
	\end{pmatrix};
\end{align*}
the external port matrices are
\begin{align*}	
	&\bm B_C^{\ex} = (1,0,\dots,0)^\top \in \mathbb{R}^{ n_C \times 1}, \qquad
	\bm B_T^{\ex}(\bm x_T) = \operatorname{diag}(
		-(1-\tfrac{T_{\env}}{T_{1}}), \dots,-(1-\tfrac{T_{\env}}{T_{\nbuild}})) \in \mathbb{R}^{ n_T \times n_T}
\end{align*}
with external input
 $\bm{u} = ( (\bm{u}_C^{\ex})^\top,  (\bm{u}_T^{\ex})^\top)^\top=(\imath, (\Gamma_1, \dots \Gamma_N))^\top$; and
the matrices for the internal coupling ${\bm C}$ are
\begin{align*}
&\bm{B}_C(\bm{x}_C)=\begin{pmatrix} 
    		-(e_0-e_1) \\[0.5ex]
    		 \phantom{-}(e_0-e_1) &	-(e_1-e_2) \\
    		         0  &      0\\
            		   &            & \ddots &  \\
     		           &            && \phantom{-}(e_{\nbuild-2}-e_{\nbuild-1}) &	-(e_{\nbuild-1}-e_{\nbuild}) \\
     		           &            &&                   0          &     0 \\
    		           &            &&                              & (e_{\nbuild-1}-e_{\nbuild}) 	 \\            		
     		           &            &&                              &    0         \\
    \end{pmatrix}\in \mathbb{R}^{n_C\times n_T},\\
    \quad 
& \bm{B}_T(\bm{x}_T)=\operatorname{diag}(\tfrac{1}{T_1},\dots,\tfrac{1}{T_N})\in \mathbb{R}^{n_T\times n_T},
    \qquad
    \hat{\bm{C}}(\bm x)=\operatorname{diag}(\tfrac{1}{R_1(T_1)},\dots,\tfrac{1}{R_N(T_N)}).
\end{align*}
Consequently, the internal inputs and outputs are 
\begin{align*}
  & \hat{\bm{u}}_C =\left(\tfrac{1}{R_1 (T_1)}, \dotsc, \tfrac{1}{R_{\nbuild}(T_{\nbuild})} \right)^{\top} , 
   && \hat{\bm{y}}_C = \left( -(e_0\!-\!e_1)^2, \dotsc, - (e_{\nbuild-1}\!-\!e_\nbuild)^2 \right)^{\top}, \\
  & \hat{\bm{u}}_T = \left( \tfrac{(e_0-e_1)^2}{R_1(T_1)}, \dotsc, 
		   \tfrac{(e_{\nbuild-1}-e_\nbuild)^2}{R_{\nbuild}(T_\nbuild)}   \right)^{\top},
               && \hat{\bm{y}}_T = \one \in \R^{n_T}.           
\end{align*}
The total Hamiltonian is given by 
$$\mathcal{H}(\bm x)=\mathcal{H}_C(\bm x_C)+\mathcal{H}_T(\bm x_T)=
\frac{1}{2}\left( C_0 e_0^2 + \sum_{i=1}^N C e_i^2 +  L\jmath_i^2 \right)
+ \sum_{i=1}^{N}M_i T_i(S_i). $$
Circuit and thermal parameters used in the numerical simulations are specified in Section~\ref{sec:benchmark}. 

\begin{remark} 
\begin{itemize}
\item[i)] The general pH-DAE circuit description \eqref{eq:network-pHS} simplifies to the pH-ODE \eqref{eq:benchmark}, first subsystem, for the scalable benchmark shown in Figure~\ref{fig:ODE-thermal-electric-scalable}. This is due to the absence of voltage sources and the fact that every (electric) node is connected to ground via a capacitor.
\item[ii)] Additional intermediate temperature states can be introduced between \(T_i\) and \(T_{i+1}\), for instance to represent a discretized thermal substrate. This refinement increases the dimensionality of the thermal system while retaining a lumped-parameter description.
\end{itemize}
\end{remark}

\begin{figure}[bt]
         \includegraphics{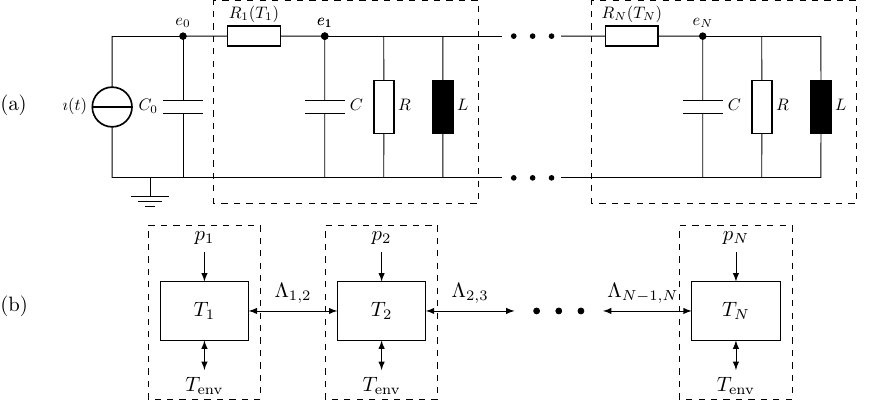}
	\caption{Scalable ODE electro-thermal circuit problem with $\nbuild$ building blocks in the dashed box. (a) Electric schematic, (b) thermal schematic.}
	\label{fig:ODE-thermal-electric-scalable}
\end{figure}

\bibliographystyle{abbrv}

\bibliography{Quellen}

\end{document}